\documentclass[11pt]{article}

\usepackage[usenames]{color}
\usepackage{hyperref}

\usepackage{mathpazo}
\setlength{\textwidth}{144mm}
\setlength{\oddsidemargin}{8mm}
\setlength{\textheight}{198mm}
\setlength{\topmargin}{-5mm}

\usepackage{amsmath,amssymb,amsfonts}

\newcommand{\update}{15th December 2009}

\title{\vskip-1.2em Group representations with empty residual spectrum}
\author{Y. Choi}
\date{\update}

\renewcommand{\emph}[1]{\textsl{#1}\/} 
\newcommand{\st}{\;:\;}
\newcommand{\defeq}{:=}

\newcommand{\dt}[1]{\textcolor{Bittersweet}{\textit{\textbf{#1}}}}

\newenvironment{YCnum}{%
\begin{enumerate}

}{\end{enumerate}\ignorespacesafterend}

\newcommand{\veps}{\varepsilon}

\newcommand{\blob}{\bullet}
\newcommand{\iso}{\cong}

\newcommand{\wLOG}{without loss of gen\-er\-ality}
\newcommand{\cu}[1]{{#1}^{\natural}}

\newcommand{\bt}{\operatorname{\mathbf{t}}}




\newcommand{\abs}[1]{\vert{#1}\vert}

\newcommand{\norm}[1]{\Vert{#1}\Vert}





\newcommand{\Cplx}{\mathbb C}
\newcommand{\Real}{\mathbb R}
\newcommand{\Zahl}{\mathbb Z}


\newcommand{\Gm}{\Gamma}
\newcommand{\lm}{\lambda}

\newcommand{\Sp}{\sigma}

\newcommand{\pnorm}[2]{\norm{#2}_{(#1)}}

\newcommand{\Bdd}{{\mathcal B}}

\newcommand{\id}[1][]{\mathbf{1}_{#1}}
\newcommand{\fA}{{\mathfrak A}}
\newcommand{\cC}{{\mathcal C}}
\newcommand{\cK}{{\mathcal K}}
\newcommand{\cM}{{\mathcal M}}
\newcommand{\fg}[1]{{\mathbf{F}_{#1}}}
\newcommand{\Cst}{\operatorname{C}^*}
\newcommand{\VN}{\operatorname{VN}}
\newcommand{\CV}[1]{\operatorname{CV}\nolimits_{#1}}

\newcommand{\pair}[2]{\langle#1,#2\rangle}

\newcommand{\tfae}{the following are equivalent}


\usepackage{amsthm}
\newcounter{pulse}[section]


\newcommand{\Thead}[1]{\sc#1}  
\theoremstyle{plain}
\newtheorem{thm}[pulse]{\Thead{Theorem}}
\newtheorem{prop}[pulse]{\Thead{Proposition}}
\newtheorem{lem}[pulse]{\Thead{Lemma}}
\newtheorem{cor}[pulse]{\Thead{Corollary}}
\theoremstyle{definition}
\newtheorem{defn}[pulse]{\Thead{Definition}}

\theoremstyle{remark}
\newtheorem{rem}[pulse]{\Thead{Remark}}

\newtheorem{eg}[pulse]{\Thead{Example}}
\newtheorem{qn}[pulse]{\Thead{Question}}


\begin{document}
\maketitle

\begin{abstract}
Let $X$ be a Banach space on which a discrete group $\Gamma$ acts by isometries. For certain natural choices of $X$, every element of the group algebra, when regarded as an operator on $X$\/, has empty residual spectrum. We show, for instance, that this occurs if $X$ is $\ell^2(\Gm)$ or the group von Neumann algebra $\VN(\Gm)$\/.
In our app\-roach, we introduce the notion of a \dt{surjunctive pair}, and develop some of the basic properties of this construction.

The cases $X=\ell^p(\Gm)$ for $1\leq p<2$ or $2<p<\infty$ are more difficult. If $\Gm$ is amenable we can obtain partial results, using a majorization result of Herz; an example of Willis shows that some condition on $\Gm$ is necessary.

\paragraph{MSC 2000:}{Primary 47A10; Secondary 47C10, 47C15}
\end{abstract}

\section{Introduction}
If $\Gm$ is a discrete group, and $X$ a
Banach space
on which $\Gm$ acts by translations, elements of the group algebra $\Cplx\Gm$ define bounded linear operators on~$X$\/. The spectral properties of these operators seem little explored when $\Gm$ is non-abelian, except if further strong conditions such as self-adjointness, or having positive coefficients, are imposed.

In particular, we might consider the \dt{residual spectrum} of such an operator. In many cases the residual spectrum turns out to be empty, motivating the following question.

\paragraph{Question.}
Given $\Gamma$ and $X$ as above: does \emph{every} $a\in\Cplx\Gm$ have empty residual spectrum, when regarded as an operator on~$X$?

\medskip

The answer is known to be yes when $\Gm$ is a \dt{Moore group}, that is, a group all of whose irreducible unitary representations are finite-dimensional. This follows by combining the following theorem of 
Runde, with the basic observation (see e.g.~\cite[Proposition 1.3.3]{LarNeu_LST}) that every decomposable operator on a Banach space has empty residual spectrum.

\begin{thm}[\cite{Run_decMoore}]\label{t:Run_decMoore}
Let $G$ be a locally compact Moore group, $X$ a Banach space, and $\theta: L^1(G)\to \Bdd(X)$ a continuous algebra homomorphism. Then $\theta(f)$ is decomposable for each $f\in L^1(G)$.
\end{thm}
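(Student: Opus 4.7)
The plan is to construct, for each $f \in L^1(G)$, a continuous homomorphism from a \emph{regular} commutative Banach algebra into $\Bdd(X)$ whose range contains $\theta(f)$, and then to invoke the classical criterion (see e.g.\ \cite{LarNeu_LST}) that operators in the continuous image of a regular commutative Banach algebra are decomposable; emptiness of the residual spectrum of $\theta(f)$ is then an immediate consequence.

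First I would set up the structural picture. Because $G$ is Moore it is amenable, so $\Cst(G) = \Cst_r(G)$. Finite-dimensionality of every irreducible unitary representation makes $\Cst(G)$ a CCR algebra with Hausdorff spectrum $\what{G}$, and locally $\Cst(G)$ can be modelled as a continuous field of matrix algebras of uniformly bounded rank. In particular the Fourier algebra $A(G)$ is a regular commutative Banach algebra on $\what{G}$, and this regularity is what will supply the necessary ``bump functions'' for the calculus.

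Next I would build the calculus itself. Given $f \in L^1(G)$, the idea is to realise $\theta(f)$ as $\Phi(g)$ for some continuous homomorphism $\Phi\colon \fA \to \Bdd(X)$, where $\fA$ is a regular commutative Banach algebra of functions on $\what{G}$ --- a natural candidate being a smooth ideal of $A(G)$ containing enough bump functions. The map $\Phi$ should be assembled from the operator-valued Fourier transform of $L^1(G)$, the representation $\theta$, and an approximation argument using a bounded approximate identity in $L^1(G)$ together with the bounded fibre dimension of $\Cst(G)$. Once such a calculus exists, the standard criterion yields decomposability of $\theta(f)$.

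The main obstacle is this second step --- constructing the regular envelope $\fA$ together with the continuity estimate for the extension into $\Bdd(X)$. This is precisely where finite-dimensionality of the irreducibles (rather than mere amenability) is essential: the uniform bound on fibre dimensions is what permits uniform control of the extension norm, and without such a bound the approximation step collapses. In the commutative case $G = \what{\Gm}$ abelian the construction is essentially Gelfand theory, and the task is to see how much of that argument survives when one only has a controlled noncommutative fibration over $\what{G}$.
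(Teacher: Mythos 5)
A preliminary remark: the paper does not prove this statement at all. It is quoted from Runde \cite{Run_decMoore} and used as a black box (the only role it plays is that decomposable operators have empty residual spectrum), so there is no ``paper's own proof'' to compare against. Judging your proposal on its own terms, it is a programme rather than a proof, and it has a genuine gap exactly where you yourself locate it, together with one factual slip.

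The slip first: the Fourier algebra $A(G)$ is an algebra of functions on $G$, not on $\what{G}$; its Gelfand spectrum is $G$ itself, and it is regular for \emph{every} locally compact group, so its regularity cannot be the property that singles out Moore groups. The commutative object naturally fibred over $\what{G}$ is the centre $ZL^1(G)$ (or $C_0(\what{G})$ at the $\Cst$-level), and it is the regularity of $ZL^1(G)$ --- which genuinely uses the Moore hypothesis --- that is relevant to arguments of this kind.

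The gap: the classical criterion you wish to invoke (see \cite[Chapter~4]{LarNeu_LST}) gives decomposability of operators of the form $\Phi(g)$ where $\Phi$ is a continuous homomorphism \emph{defined on} a regular, semisimple commutative Banach algebra. For a non-central $f\in L^1(G)$ there is no a priori reason that $\theta(f)$ lies in the range of any such $\Phi$: the closed commutative subalgebra of $L^1(G)$ generated by $f$ need not be regular, and $f$ need not belong to the centre. Passing from the commutative/central situation to an arbitrary $f$ in the noncommutative algebra $L^1(G)$ is precisely the content of Runde's theorem, and your sketch replaces it with ``$\Phi$ should be assembled from \dots\ an approximation argument''. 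Note also that decomposability is not stable under norm limits of operators, so an approximation via a bounded approximate identity cannot simply be waved through; one needs either commuting approximants with additional structure or a uniform (``super-decomposable'') version of the property. Runde's actual argument proceeds through the structure theory of Moore groups (projective limits of Lie groups possessing open finite-index subgroups of the form vector group times compact group), reducing to building blocks where $L^1(G)$ is tightly controlled by its regular centre; the uniform bound on the dimensions of the irreducible representations enters there, much as you anticipate, but the reduction is carried out group-theoretically rather than by a fibrewise calculus over $\what{G}$. As it stands, your second step is not a proof outline but a restatement of the theorem to be proved.
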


If $X$ is finite-dimensional, the residual spectrum of any operator is empty (this just follows by counting dimension). We therefore restrict attention to the infinite-dimensional setting. Moreover, it follows from a previous observation of the author (\cite[Theorem~1]{YC_CMB}; but see Remark~\ref{rem:conjugate} below) that we again obtain a positive answer when $X=\ell^\infty(\Gm)$\/, for \emph{arbitrary}~$\Gm$\/.
In contrast, for $X=\ell^1(\Gm)$ we shall see that the answer depends on our group~$\Gm$\/.
The case $X=\ell^2(\Gm)$ is an obvious choice to consider next, and as a consequence of our main result (Theorem~\ref{t:headline}) we shall see that again the answer to our question is ``yes'' for this choice of $X$\/, regardless of our group~$\Gm$\/.

We attempt in this article to initiate a systematic approach to such questions, developing some machinery in the more general setting of subalgebras of $\Bdd(X)$.
To save needless repetition: given a Banach space $X$ and a
subalgebra $A\subseteq \Bdd(X)$ -- which need not be norm-closed nor unital -- we say that the pair $(A,X)$ is a \dt{surjunctive pair} if every $T\in A$ has empty residual spectrum. (The terminology will be explained below, see Remark~\ref{rem:terminology}.) The main result of this article can now be stated as follows.

\begin{thm}\label{t:headline}
Let $\Gm$ be a discrete group, and let $X$ be either the reduced group $\Cst$-algebra $\Cst_r(\Gm)$\/, or the non-commutative $L^p$-space associated to the group von Neumann algebra $\VN(\Gm)$, for some $p\in[1,\infty]$.

Let $\Gm$ act by left translations on $X$ in the natural way, and let $\imath: \Cplx\Gm\to\Bdd(X)$ be the induced algebra homomorphism. Then $(\imath(\Cplx\Gm), X)$ is a surjunctive pair.
\end{thm}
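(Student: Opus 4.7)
My plan is to reduce the problem to a functional-calculus construction in $\Cst_r(\Gm)$. First I would observe that in every case $\imath(a)$ is left multiplication $L_a$ by an element $a \in \Cplx\Gm \subseteq \Cst_r(\Gm) \subseteq \VN(\Gm)$, and that invertibility of $a - \lambda\delta_e$ in $\VN(\Gm)$ provides a bounded two-sided inverse $L_{(a-\lambda\delta_e)^{-1}}$ of $\imath(a) - \lambda I$ on $X$. Hence the spectrum of $\imath(a)$ on $X$ is automatically contained in the spectrum of $a$ inside $\VN(\Gm)$. To establish surjunctivity of the pair -- that is, to ensure that any $\imath(a) - \lambda I$ which is bounded below on $X$ is automatically surjective -- I would then show, for every $x = a - \lambda\delta_e \in \Cplx\Gm$ which is not invertible in $\VN(\Gm)$, that $L_x$ fails to be bounded below on $X$ by producing a sequence $(f_n) \subseteq X$ with $\|f_n\|_X = 1$ and $\|L_x f_n\|_X \to 0$.

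The tool for producing such sequences will be continuous functional calculus applied to $h := (x^*x)^{1/2} \in \Cst_r(\Gm)$, which has $0 \in \sigma(h)$ thanks to the non-invertibility of $x$. I would fix continuous cut-offs $g_n : [0,\|h\|] \to [0,1]$ with $g_n(0) = 1$ and $g_n(t) = 0$ for $t \ge 1/n$, giving nonzero positive elements $g_n(h) \in \Cst_r(\Gm)$ obeying the operator inequality
\[
0 \;\le\; h\, g_n(h) \;\le\; n^{-1}\, g_n(h) \quad \text{in } \VN(\Gm).
\]
Normalising to $f_n := g_n(h) / \|g_n(h)\|_X$ is legitimate in each target space: for $X = \Cst_r(\Gm)$ or $X = \VN(\Gm)$ the $C^*$-identity together with $g_n(0) = 1$ forces $\|g_n(h)\| = 1$; for $X = L^p(\VN(\Gm),\tau)$ with $1 \le p < \infty$, faithfulness of $\tau$ keeps $\|g_n(h)\|_p$ strictly positive. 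Polar-decomposing $x = vh$ in $\VN(\Gm)$ (so $\|v\|_\infty \le 1$) and combining the displayed inequality with monotonicity of $\|\cdot\|_X$ on the positive cone of $\VN(\Gm)$ and the contractivity of left multiplication by $v$ on $X$ then yields $\|L_x f_n\|_X \le 1/n$.

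The hard part will be arranging this last step uniformly across all the spaces listed in the statement. It rests on two standard but nontrivial facts from noncommutative integration: that $0 \le y \le z$ in the positive cone of $\VN(\Gm)$ implies $\|y\|_X \le \|z\|_X$ for each of the norms involved, and that left multiplication by a contractive element of $\VN(\Gm)$ is a contraction on $X$. Both are classical, but packaging them so that one functional-calculus construction serves all the spaces simultaneously is the real bookkeeping burden. Once that is in place, the contrapositive -- any $\imath(a) - \lambda I$ which is bounded below on $X$ must in fact be invertible, hence surjective -- gives surjunctivity of the pair $(\imath(\Cplx\Gm), X)$.
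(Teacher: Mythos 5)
The core of your construction --- continuous functional calculus applied to $x^*x$ with cut-offs concentrated near $0$, producing approximate kernel vectors for $L_x$ --- is exactly the engine of the paper's own proof (its Theorem~\ref{t:on-cstar}), so the strategy is fundamentally sound. But there is a genuine gap, and it sits at the single most important step: you assert that $0\in\Sp(h)$ for $h=(x^*x)^{1/2}$ ``thanks to the non-invertibility of $x$''. In a general $\Cst$-algebra or von Neumann algebra this implication is \emph{false}: for the unilateral shift $S$ on $\ell^2(\mathbb{N})$ one has $S^*S=I$ invertible while $S$ is not. Invertibility of $x^*x$ only yields a \emph{left} inverse $(x^*x)^{-1}x^*$ of $x$. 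What rescues the implication for $\VN(\Gm)$ is precisely that group von Neumann algebras are \emph{directly finite} (left-invertible elements are invertible) --- Kaplansky's theorem, which is the central algebraic input of the whole paper and which your proposal never invokes. Without it your sequence $(f_n)$ need not exist; with it the step is immediate. This must be stated and justified explicitly, since it is where all the group-specific content of the theorem lives.

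Aside from that, your treatment of the $L^p$ cases takes a genuinely different route from the paper's. The paper proves surjunctivity of $(\VN(\Gm),\VN(\Gm))$ entirely in the operator norm, where the $\Cst$-identity gives the exact computation $\norm{ay_n}=\norm{a^*a\,y_n^2}^{1/2}=\sup_{\lambda\in\Sp(a^*a)}\abs{\lambda^{1/2}f_n(\lambda)}$ with no polar decomposition, and then transfers to $L^p(\VN(\Gm),\tau)$ via an abstract lemma (Lemma~\ref{l:soi-meme}) combined with the fact, proved in the appendix, that the embedding $\VN(\Gm)\to\Bdd(L^p(\VN(\Gm),\tau))$ has closed range. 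You instead estimate the $L^p$-norms directly, using the polar decomposition $x=vh$ and monotonicity of $\pnorm{p}{\cdot}$ on the positive cone. This is workable, but note two points of bookkeeping you have correctly identified as nontrivial: the inequality $0\leq y\leq z\Rightarrow\tau(y^{p/2})\leq\tau(z^{p/2})$ does not follow from operator monotonicity when $p>2$ and needs a citation (e.g.\ to generalized singular value/majorization arguments); and for $X=\Cst_r(\Gm)$ the partial isometry $v$ lies only in $\VN(\Gm)$, not in $\Cst_r(\Gm)$, though the estimate survives because the two norms agree on $\Cst_r(\Gm)$. The paper's transfer lemma buys uniformity across all $p$ at the cost of the closed-range fact; your direct estimates buy concreteness at the cost of the noncommutative-integration inequalities.
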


The proof of this will be given in Section~\ref{s:cstar}, and actually yields a stronger result when $X$ is one of the non-commutative $L^p$-spaces: in these cases, one can replace $\imath(\Cplx\Gm)$ with the group von Neumann algebra. We note that the noncommutative $L^2$-space associated to $\VN(\Gm)$ is nothing but $\ell^2(\Gm)$, and the $\Gm$-action referred to above is just the left regular representation of $\Gm$ on $\ell^2(\Gm)$.

\begin{rem}\label{rem:conjugate}
We have been lax in describing ``the'' action of $\ell^1(\Gm)$ on $\ell^p(\Gm)$ that was mentioned earlier. Unless specified, we are referring to the homomorphism $L_\blob: \ell^1(\Gm)\to\Bdd(\ell^p(\Gm))$ that is defined by `left translation',~{\it viz.}
\[ L_a: \ell^p(\Gm)\to \ell^p(\Gm) \quad;\quad (L_a\xi)(h)\defeq \sum_{g\in\Gm} a(g) \xi(g^{-1}h) \qquad(a\in \ell^1(\Gm), h\in G). \]

It will be convenient at one point below to consider another algebra homomorphism $\rho_\blob: \ell^1(\Gm)\to \Bdd(\ell^p(\Gm))$\/, which arises by considering the \emph{adjoint} of the natural \emph{right} action of $\ell^1(\Gm)$ on $\ell^q(\Gm))$ for $p^{-1}+q^{-1}=1$\/. Explicitly,
\[ \rho_a: \ell^p(\Gm)\to \ell^p(\Gm) \quad;\quad (\rho_a\xi)(h)\defeq \sum_{g\in\Gm} a(g) \xi(hg) \qquad(a\in \ell^1(\Gm), h\in G). \]
($L_\blob$ is the \dt{left regular representation}, and $\rho_\blob$ the \dt{right regular representation}.)
Although $L_\blob$ and $\rho_\blob$ do not coincide, in general, they are inter\-twined by the `flip' operator
\[ \bt : \ell^p(\Gm)\to\ell^p(\Gm) \quad;\quad \bt\xi(h) \defeq \xi(h^{-1})\qquad(h\in\Gm); \]
that is, $\bt \rho_a = L_a \bt$ for all $a\in\ell^1(\Gm)$. It follows that $(L_\blob(\ell^1(\Gm)),\ell^p(\Gm))$ is a surjunctive pair if and only if $(\rho_\blob(\ell^1(\Gm)),\ell^p(\Gm))$~is.
\end{rem}

\medskip
Let us review the outline of this paper. We collect some general preliminaries in the next section. Section~\ref{s:cstar} contains the results used to prove Theorem~\ref{t:headline}, as well as some with applications to CCR groups. Finally, we consider the case where $A=X=\ell^1(\Gamma)$\/. Here the question of surjunctivity remains open, although we obtain a partial result in the case where $\Gm$ is amenable. We close with some specific questions which the work here raises.

\begin{rem}
In view of Theorem~\ref{t:Run_decMoore}, It is natural to wonder what can be said for the left-regular representation of $L^1(G)$ on $L^2(G)$ when $G$ is not discrete. We leave this for future work, although -- as noted above -- in Section~\ref{s:cstar} we can apply some of our general results to the case where $G$ is~CCR.
\end{rem}

\hrule{\small
\paragraph{Note added Jan.\ 2010} After the present work had been submitted for publication, I found that recent work of R.~Tessera gives a complete answer to Question~\ref{q:Tessera} below: see arXiv \href{http://arxiv.org/abs/0801.1532}{0801.1532v4}, Corollary~1.9. A note will be added in proof to the published version of this article.
}
\medskip\hrule

\begin{section}{Preliminaries etc.}

\subsection*{Notation and other conventions}

\paragraph{Notation.} If $X$ is a Banach space and $A\subseteq B(X)$ is a subalgebra, we put
\[ \cu{A}\defeq \{ T+\lambda I \st T\in A, \lm\in\Cplx\}\,.\]
Note that if $A$ is closed in $B(X)$ with respect to the norm topology, then so is~$\cu{A}$.

Recall that if $T:X\to X$ is a bounded linear operator on a Banach space $X$\/, then the \dt{residual spectrum} of $T$\/, which we denote by $\Sp_r(T)$\/, is the set of all $\lambda\in\Sp(T)$ such that $T-\lambda I$ is injective with closed range.
The points of $\Sp(T)\setminus\Sp_r(T)$ are `permanently singular', in the following sense: if there exists a Banach space $Y$\/, into which $X$ embeds as a closed subspace, together with $S\in B(Y)$ such that $S(X)\subseteq X$ and $S\vert_X=T$\/, then $\Sp(T)\setminus\Sp_r(T) \subseteq \Sp(S)$\/.

The following lemma is mostly just a translation of the definition (together with an application of the open mapping theorem). We state it as a lemma for later reference, and omit the proof which is straightforward. 

\begin{lem}\label{l:basic}
Let $X$ be a Banach space and $A$ a subalgebra of $\Bdd(X)$. Then \tfae:
\begin{YCnum}
\item the pair $(A,X)$ is surjunctive;
\item whenever $T\in \cu{A}$ is such that $T:X\to X$ is injective with closed range, $T$ is automatically surjective;
\item whenever $T\in \cu{A}$ and $T:X\to X$ is injective but non-invertible, there exists a sequence $(y_n)\subset X$\/, such that $\norm{y_n}\geq 1$ for all $n$ while $\norm{Ty_n}\to 0$\/.
\end{YCnum}
\end{lem}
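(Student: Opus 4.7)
The plan is to reduce all three conditions to two standard facts: (a) the open mapping theorem, which guarantees that an injective bounded operator with closed range is invertible as soon as it is surjective; and (b) the elementary characterization that $S\in \Bdd(X)$ is injective with closed range if and only if it is \emph{bounded below}, i.e.\ there is $c>0$ with $\norm{Sx}\geq c\norm{x}$ for every $x\in X$.

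For (i)$\Leftrightarrow$(ii), I would unpack the given definition of residual spectrum: $(A,X)$ is surjunctive precisely when, for every $T\in A$ and every $\lm\in\Cplx$, the injectivity of $T-\lm I$ together with its having closed range forces $T-\lm I$ to be invertible. As $T$ varies over $A$ and $\lm$ over $\Cplx$, the operators $T-\lm I$ range exactly over $\cu{A}$; by fact (a) above, invertibility in this situation is equivalent to surjectivity. This is a direct restatement of (ii).

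For (ii)$\Leftrightarrow$(iii), fix $S\in\cu{A}$ which is injective but non-invertible. Assuming (ii), $S$ cannot have closed range, for then (ii) would force surjectivity and hence invertibility; by fact (b), $S$ fails to be bounded below, so for each $n$ I can choose $y_n\in X$ with $\norm{y_n}=1$ and $\norm{Sy_n}<1/n$, which gives (iii). Conversely, assume (iii) and suppose $S\in\cu{A}$ is injective with closed range; by (b), $S$ is bounded below, so no sequence $(y_n)$ with $\norm{y_n}\geq 1$ and $\norm{Sy_n}\to 0$ can exist. The contrapositive of (iii) then forces $S$ to be invertible, in particular surjective, which is (ii).

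There is no real obstacle here: the equivalences amount to cycling through the definitions and applying (a) and (b). This fits the author's description of the proof as a straightforward translation; I would present it as two short paragraphs and omit no details beyond citing the open mapping theorem and the bounded-below criterion.
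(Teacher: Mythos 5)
Your argument is correct and is exactly the proof the paper has in mind: the author omits it, describing it as ``mostly just a translation of the definition (together with an application of the open mapping theorem)'', and your two reductions --- identifying $\cu{A}$ with the set of operators $T-\lm I$ and using the bounded-below characterization of ``injective with closed range'' --- supply precisely that translation. Nothing is missing.
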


\begin{rem}\label{rem:terminology}
It is condition (ii) of the preceding lemma which motivates our use of the word `surjunctive'. The term is borrowed from a notion from the theory of dynamical systems, introduced by Gottschalk~\cite{Gott_surjunc} in the context of group actions on certain compact spaces. The corresponding notion for Banach spaces seems not to have been systematically considered.
\end{rem}

\subsection*{Various background remarks and results}
In the definition of a surjunctive pair, we did not insist that $A$ is a \emph{closed} subalgebra (this gives us slightly greater flexibility in the statements of our results).
The following lemma shows that this is not an important distinction.

\begin{lem}
Let $(A,X)$ be a surjunctive pair and let $\fA$ be the closure of $A$ in the norm topology of $\Bdd(X)$. Then $(\fA,X)$ is a surjunctive pair.
\end{lem}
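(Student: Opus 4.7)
My plan is to verify criterion~(ii) of Lemma~\ref{l:basic}: given $T \in \cu{\fA}$ that is injective with closed range, I will show $T$ is surjective. The first point to settle is that $\cu{\fA}$ coincides with the norm closure of $\cu{A}$ in $\Bdd(X)$. Indeed, $\cu{\fA}$ is norm-closed and contains $\cu{A}$, so it contains the closure of $\cu{A}$; conversely, any element of $\cu{\fA}$ has the form $T + \lambda I$ with $T = \lim_n T_n$, $T_n \in A$, and therefore equals $\lim_n (T_n + \lambda I)$, a limit of elements of $\cu{A}$.

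Now fix $T \in \cu{\fA}$ injective with closed range. The open mapping theorem applied to $T : X \to T(X)$ produces $c > 0$ with $\norm{Tx} \geq c\norm{x}$ for every $x \in X$, so $T$ is bounded below. Choose $T_n \in \cu{A}$ with $\norm{T_n - T} < c/4$; the reverse triangle inequality then yields $\norm{T_n x} \geq (3c/4)\norm{x}$, so each such $T_n$ is itself injective with closed range. Since $(A,X)$ is surjunctive, Lemma~\ref{l:basic}(ii) forces $T_n$ to be surjective, hence bijective, and by the open mapping theorem $T_n$ is invertible with $\norm{T_n^{-1}} \leq 4/(3c)$.

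Finally, $\norm{T - T_n}\,\norm{T_n^{-1}} \leq 1/3 < 1$ lets me write $T = T_n\bigl(I + T_n^{-1}(T - T_n)\bigr)$ as a product of invertible operators on $X$; hence $T$ is invertible, and in particular surjective, completing the verification of criterion~(ii) for $(\fA, X)$. There is no real obstacle in this argument: the only step that deserves a moment's care is the identification $\cu{\fA} = \clos{\cu{A}}$, which is what allows surjunctivity of $(A,X)$ to be invoked on the approximating operators $T_n$.
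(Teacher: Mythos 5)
Your proof is correct and follows essentially the same route as the paper's: approximate $T\in\cu{\fA}$ by an element of $\cu{A}$ close enough that the lower bound from the open mapping theorem persists, invoke surjunctivity of $(A,X)$ to invert the approximant, and conclude via a Neumann-series perturbation argument. The only difference is cosmetic (your constant $c/4$ versus the paper's $\delta/3$, and your explicit check that $\cu{\fA}$ is the norm closure of $\cu{A}$, which the paper takes for granted).
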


\begin{proof}
Let $T\in \cu{\fA}$ be injective with closed range.
By the open mapping theorem, there exists $\delta>0$ such that $\norm{T(x)}\geq \delta\norm{x}$ for all $x\in X$. 
Since $\cu{A}$ is norm-dense in $\cu{\fA}$, there exists $T_1\in \cu{A}$ such that $\norm{T-T_1} \leq \delta/3$. Then since $\norm{T_1(x)}\geq (2\delta/3)\norm{x}$ for all $x\in X$, we see that $T_1$ is injective with closed range; as $(A,X)$ is a surjunctive pair, $T_1$ is therefore invertible in $\Bdd(X)$. Note that $\norm{T_1^{-1}} \leq 3(2\delta)^{-1}$\/, 
which yields
\[ \norm{T_1^{-1}T - I} \leq \norm{T_1^{-1}} \cdot \norm{T-T_1} \leq \frac{3}{2\delta} \cdot \frac{\delta}{3} = \frac{1}{2} \,.\]
It follows that $T_1^{-1}T$ is invertible, whence $T$ itself is invertible, as required.
\end{proof}

\begin{rem}\label{r:RumDoodle}
We shall see in Example~\ref{eg:SOTclosure-not-surjunctive} that in general, ``norm closure'' cannot be replaced by ``closure in the strong operator topology''.
\end{rem}

\begin{lem}\label{l:soi-meme}
Let $X$ be a Banach space, $A\subseteq \Bdd(X)$ a closed subalgebra.
If $(A,\cu{A})$, where we let $A$ act on itself by left multiplication, is surjunctive, then so is $(A,X)$.
\end{lem}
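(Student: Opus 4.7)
The plan is to verify condition (ii) of Lemma \ref{l:basic} for the pair $(A,X)$. So I assume $T \in \cu{A}$ is injective with closed range on $X$, and aim to show that $T$ is surjective. The natural idea is to exploit the surjunctivity of $(A,\cu{A})$ by considering the left multiplication operator $L_T: \cu{A} \to \cu{A}$, $S \mapsto TS$ (well-defined since $\cu{A}$ is a unital subalgebra of $\Bdd(X)$). Writing $T = T_0 + \lm I$ with $T_0\in A$ gives $L_T = L_{T_0} + \lm I_{\cu{A}}$, so under the identification of $A$ with its action on $\cu{A}$ by left multiplication, $L_T$ lies in the unitization of that acting algebra; hence the hypothesis will apply to $L_T$ once I verify the hypotheses of Lemma \ref{l:basic}(ii) for it.

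First I would check that $L_T$ is injective on $\cu{A}$: if $TS = 0$ then for every $x \in X$ we have $T(Sx) = 0$, and injectivity of $T$ on $X$ forces $Sx = 0$, hence $S = 0$. Next, by the open mapping theorem there exists $\delta > 0$ with $\norm{Tx} \geq \delta \norm{x}$ for all $x \in X$, and this quickly implies $\norm{L_T(S)} = \norm{TS} \geq \delta \norm{S}$ for every $S \in \cu{A}$. So $L_T$ is bounded below on $\cu{A}$, and in particular has closed range.

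The surjunctivity of $(A,\cu{A})$ combined with Lemma \ref{l:basic}(ii) then yields that $L_T$ is surjective on $\cu{A}$. Since $I \in \cu{A}$, there exists $S \in \cu{A}$ with $TS = I$, showing that $T$ is surjective on $X$; together with injectivity, $T$ is then invertible. I do not anticipate a serious obstacle here---the argument is essentially bookkeeping once one observes that left multiplication by $T$ transfers the spectral hypotheses from $X$ to $\cu{A}$ faithfully. The one subtle point is the identification of $L_T$ with an element of the unitization of $A$'s action on $\cu{A}$, so that the surjunctivity hypothesis genuinely applies, but this is immediate from the definitions.
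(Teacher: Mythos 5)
Your argument is correct and is essentially the paper's proof run in the direct rather than the contrapositive direction: both hinge on the left-multiplication operator $L_T$ on $\cu{A}$, the transfer of injectivity and of the lower bound $\norm{Tx}\geq\delta\norm{x}$ from $X$ to $\cu{A}$, and the use of $I\in\cu{A}$ to convert surjectivity of $L_T$ into a right inverse for $T$ (the paper instead assumes $T$ injective but non-surjective and extracts a sequence witnessing non-closed range). Your direct formulation is, if anything, slightly cleaner since it avoids the sequence-extraction step; note only that the completeness of $\cu{A}$, needed for the open mapping theorem and for $(A,\cu{A})$ to be a pair in the sense of the definition, is exactly where the hypothesis that $A$ is closed enters.
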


\begin{proof}
Let $T\in\cu{A}\subseteq B(X),$ and suppose that $T:X\to X$ is injective but not surjective. Let $L_T: \cu{A} \to \cu{A}$ denote the operator $S\mapsto TS$\/. Injectivity of $T$ implies that $L_T$ is injective.
Moreover, $L_T$ is not surjective: for if it were, then since $I\in\cu{A}$ there would exist $S\in \cu{A}$ such that $TS=I$\/, and so $T$ would be surjective, contrary to our original assumption.

Since $L_T$ is injective but not surjective, and $(A,\cu{A})$ is a surjunctive pair, $L_T$ must have non-closed range; hence there exists a sequence $(S_n)\subset \cu{A}$ such that $\norm{S_n}=1$ for all $n$ while $\norm{TS_n}\to 0$\/.
Choose a sequence $(x_n)\subset X$ such that $\norm{x_n}=1$ and $\norm{S_nx_n}\geq \frac{n}{n+1}$. Put $y_n = \frac{n+1}{n}S_nx_n$\/;
then by construction, $\norm{y_n}\geq 1$ for all $n$, while $\norm{Ty_n} \leq 2\norm{TS_n} \to 0$\/.
\end{proof}

\subsection*{Directly finite Banach algebras}
The notion of surjunctive pair is linked closely to the classical notion of a directly finite ring (sometimes called ``Dedekind finite'', or even just ``finite'').

\begin{defn}
A ring $R$ with identity is said to be \dt{directly finite} if each left-invertible element of $R$ is in fact invertible in~$R$\/.
\end{defn}

\begin{prop}
Let $(A,X)$ be a surjunctive pair, where $A$ is a \emph{unital} subalgebra of $\Bdd(X)$. Then $A$ is directly finite.
\end{prop}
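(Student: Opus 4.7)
The plan is to translate the condition ``$T$ is left-invertible in $A$'' into the hypotheses of condition (ii) of Lemma \ref{l:basic}, and then exploit the resulting invertibility to recover a two-sided inverse that already lies in $A$.

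So, suppose $T\in A$ has a left inverse $S\in A$, i.e.\ $ST=I$. Two consequences fall out immediately: $T$ is injective (since $Tx=0$ forces $x=STx=0$), and $T$ is bounded below, because $\norm{x}=\norm{STx}\leq \norm{S}\cdot\norm{Tx}$. The lower bound shows that $T$ has closed range. Since $A$ is unital, $A\subseteq \cu{A}$, so $T\in \cu{A}$, and Lemma \ref{l:basic}(ii) applied to the surjunctive pair $(A,X)$ upgrades ``injective with closed range'' to ``surjective''. Thus $T$ is a bijection of $X$, hence invertible in $\Bdd(X)$ by the open mapping theorem.

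To finish, multiply the identity $ST=I$ on the right by $T^{-1}\in \Bdd(X)$; this gives $S=T^{-1}$, so the operator-theoretic inverse of $T$ automatically lies in $A$, showing that $T$ is invertible in $A$. The argument is really just bookkeeping once Lemma \ref{l:basic}(ii) is in hand, and there is no genuine obstacle to worry about; the only point worth flagging is that the existence of a left inverse in $A$ (or even just in $\Bdd(X)$) is precisely what produces the ``injective with closed range'' hypothesis needed to feed into surjunctivity.
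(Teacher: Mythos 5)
Your proof is correct and follows essentially the same route as the paper: a left inverse $S$ gives the lower bound $\norm{x}\leq\norm{S}\,\norm{Tx}$, so $T$ is injective with closed range, and surjunctivity yields invertibility. You are in fact slightly more careful than the paper in noting that the inverse $T^{-1}=S$ already lies in $A$, which is the point needed for direct finiteness of $A$ as a ring.
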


\begin{proof}
Let $S,T\in A$ be such that $ST=I$\/. Since $\norm{S}\,\norm{Tx} \geq \norm{x}$ for all $x\in X$, we see that $T$ is injective with closed range, and hence by the surjunctivity assumption is invertible.
\end{proof}

We shall see later that the converse is false: there exist directly finite, unital Banach algebras $A$ for which the pair $(A,A)$ is not surjunctive. However, if we let $A$ act not on itself but on its dual space $A^*$\/, then we have the following result.

\begin{thm}\label{t:dual}
Let $A$ be a directly finite, unital Banach algebra. Regard $A^*$ as a left $A$-module in the natural way, and let $\imath: A \to \Bdd(A^*)$ be the corresponding (isometric, unital) embedding. Then $(\imath(A),A^*)$ is a surjunctive pair.
\end{thm}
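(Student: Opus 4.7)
The plan is to verify condition~(ii) of Lemma~\ref{l:basic}: given $T\in\cu{\imath(A)}$ that is injective with closed range as an operator on $A^*$, I need to show $T$ is surjective. Since $\imath$ is unital we have $\cu{\imath(A)}=\imath(A)$, so $T=\imath(b)$ for some $b\in A$ (any scalar term $\lambda I$ having been absorbed as $\imath(\lambda\cdot 1_A)$).

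The key is to identify $\imath(b)$ concretely through duality on $A$. The natural left $A$-module structure on $A^*$ is $(a\cdot f)(c)=f(ca)$; this is the unique convention making $a\mapsto \imath(a)$ an algebra homomorphism rather than an anti-homomorphism. Under this identification, $\imath(b)=R_b^*$, the Banach space adjoint of right multiplication $R_b:A\to A$, $c\mapsto cb$.

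I now invoke two classical dualities: $R_b^*$ is injective if and only if $R_b$ has dense range (Hahn--Banach), and $R_b^*$ has closed range if and only if $R_b$ does (Banach's closed range theorem). The hypothesis on $T=R_b^*$ therefore reduces to the statement that $R_b$ is surjective on $A$, which --- since $A$ is unital --- furnishes some $s\in A$ with $sb=1$; that is, $b$ is left-invertible in $A$. Direct finiteness of $A$ now forces $b$ to be invertible, whence $R_b$ is invertible on $A$ and $T=R_b^*$ is invertible on $A^*$, in particular surjective. No step poses a real obstacle; the one point requiring care is fixing the correct module convention so that $\imath$ is genuinely a homomorphism, after which the closed range theorem together with direct finiteness completes the argument.
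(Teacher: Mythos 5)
Your argument is correct and follows the same overall skeleton as the paper's proof: reduce via Lemma~\ref{l:basic}, identify $\imath(b)$ with the adjoint of right multiplication $R_b:A\to A$, $x\mapsto xb$ (with the same care about the module convention), deduce from injectivity of $\imath(b)$ and Hahn--Banach that $R_b$ has dense range, extract a left inverse of $b$, and finish with direct finiteness. The one place you diverge is in how density of the range of $R_b$ is upgraded to left-invertibility of $b$. You invoke Banach's closed range theorem, so you genuinely use the closed-range hypothesis on $T$ to conclude that $R_b$ is surjective and hence that $\id[A]=sb$ for some $s\in A$. The paper instead observes that the closed-range hypothesis is not needed at all: since the group $G$ of invertible elements of the unital Banach algebra $A$ is open and non-empty, the dense range of $R_b$ must meet $G$, producing $u=xb\in G$ and hence $\id[A]=u^{-1}xb$. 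This yields the strictly stronger conclusion that injectivity of $\imath(b)$ alone forces $b$ to be invertible. Both routes are valid; yours leans on a slightly heavier (though entirely standard) duality theorem, while the paper's trades that for the openness of the invertible group and, in exchange, a sharper statement.
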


\begin{proof}
By Lemma~\ref{l:basic}, it suffices to show that whenever $\imath(a):A^*\to A^*$ is injective with closed range, it is surjective.

In fact, we do not need the condition of having closed range; injectivity is enough. For, since we may identify $\imath(a):A^*\to A^*$ with the adjoint of the map $R_a:A\to A; x\mapsto xa$\/, when $\imath(a)$ is injective the Hahn-Banach theorem implies that $R_a$ has dense range. Let $G$ denote the group of invertible elements in $A$\/; since $G$ is open, there exists $u\in G$ and $x\in A$ such that $u=R_a(x)=xa$\/. Hence $\id[A]=u^{-1}xa$\/, and so by direct finiteness of $A$ we must have $a\in G$\/.
\end{proof}

All this is relevant to our original problem, because of the following old result.
\begin{thm}[Kaplansky]\label{t:Kap}
Let $\Gm$ be a discrete group and $\VN(\Gm)$ its group von Neumann algebra. Then $\VN(\Gm)$ is directly finite.
\end{thm}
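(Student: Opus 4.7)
The plan is to use the canonical tracial state on $\VN(\Gm)$ to promote any one-sided inverse to a two-sided inverse. First I would recall that $\tau(T)\defeq\pair{T\delta_e}{\delta_e}$, where $\delta_e\in\ell^2(\Gm)$ is the characteristic function of the identity, is a faithful, normal, tracial state on $\VN(\Gm)$. It is a vector state (hence normal); it is faithful because $\delta_e$ is a separating vector for $\VN(\Gm)$; and the trace identity $\tau(ST)=\tau(TS)$ holds on the $\wstar$-dense subalgebra $\Cplx\Gm$ (where it reduces to the elementary fact that $gh=e\iff hg=e$) and then extends to all of $\VN(\Gm)$ by $\wstar$-continuity.

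Given $a,b\in\VN(\Gm)$ with $ab=I$, set $p\defeq ba$; then $p^2=b(ab)a=ba=p$, so $p$ is an idempotent in $\VN(\Gm)$. The key step is to pass to the range projection $r$ of $p$, which lies in $\VN(\Gm)$ because the range of $p$ is a closed $\VN(\Gm)'$-invariant subspace. By construction $rp=p$ and $pr=r$, and the trace identity then gives the chain
\[ \tau(r) = \tau(pr) = \tau(rp) = \tau(p) = \tau(ba) = \tau(ab) = \tau(I) = 1. \]

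Since $I-r\geq 0$ and $\tau(I-r)=0$, faithfulness of $\tau$ forces $r=I$; but an idempotent whose range is the whole Hilbert space must itself equal the identity, because any $y$ can be written as $y=px$, whence $py=p^2x=y$. Hence $ba=p=I$, as required. I do not anticipate a real obstacle here: the only non-routine input is the traciality of $\tau$, which is a standard feature of group von Neumann algebras, and the range-projection device that does the actual work is a familiar tool for exploiting traciality.
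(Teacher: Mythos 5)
Your proof is correct, and it is essentially the classical argument the paper itself attributes to Kaplansky (``the basic theory of projections in von Neumann algebras'' applied to the canonical finite trace); the paper quotes the theorem without proof, so there is nothing to diverge from. The one step worth spelling out slightly more is the extension of traciality from $\Cplx\Gm$ to $\VN(\Gm)$: multiplication is only \emph{separately} \wstar-continuous, so one fixes one variable at a time, but this is routine and does not affect the validity of the argument.
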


Since direct finiteness is obviously inherited by unital subalgebras, $\ell^1(\Gm)$ is therefore directly finite. Combining Theorem~\ref{t:dual}
with Remark~\ref{rem:conjugate}, we conclude that the pair $(\ell^1(\Gm),\ell^\infty(\Gm))$ is surjunctive.
This had already been observed in previous work of the author \cite{YC_CMB}, which was another source of motivation for the present article. Indeed, the proof of Theorem~\ref{t:dual} is just an abstract version of the argument of~\cite{YC_CMB}.

\begin{rem}
Kaplansky's original proof of Theorem~\ref{t:Kap} relied on the basic theory of projections in von Neumann algebras. In~\cite{Mont} Montgomery gave a purely $\Cst$-algebraic proof that $\Cplx\Gm$ is directly finite. Inspection of her arguments shows that they extend to the larger algebra $\VN(\Gm)$, although this seems not to have been written up explicitly in the literature.
\end{rem}
\end{section}

\begin{section}{Results for directly finite \texorpdfstring{$\Cst$}{C*}-algebras}\label{s:cstar}
We start with a small observation, which is purely algebraic.
\begin{lem}\label{l:dir-fin-prod-inv}
Let $A$ be a unital, directly finite algebra, and let $x,y\in A$.
\begin{YCnum}
\item If $xy$ is invertible in $A$, then so are both $x$ and $y$.
\item $\Sp(xy)=\Sp(yx)$.
\end{YCnum}
\end{lem}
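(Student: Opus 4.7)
For part (i), my plan is simply to unpack the definition of direct finiteness. If $xy$ is invertible, pick $z\in A$ with $xyz=zxy=1$. Then $x(yz)=1$ exhibits $yz$ as having $x$ as a left inverse, so by direct finiteness $yz$ is invertible; its two-sided inverse must agree with the given left inverse $x$, which shows that $x$ itself is invertible (with inverse $yz$). Symmetrically $(zx)y=1$ makes $y$ left-invertible, hence invertible.

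For part (ii), I would split at $\lambda=0$. The case $\lambda=0$ is immediate from (i) together with the trivial fact that a product of invertibles is invertible: $0\notin\Sp(xy)$ iff $xy$ is invertible iff both $x$ and $y$ are invertible iff $yx$ is invertible iff $0\notin\Sp(yx)$. For $\lambda\neq 0$, after dividing through by $\lambda$ it suffices to prove that $1-xy$ is invertible iff $1-yx$ is. This is the classical Jacobson-style identity, valid in \emph{any} unital associative algebra: if $w=(1-xy)^{-1}$, so that $w-xyw=1=w-wxy$, a short direct computation verifies that $1+ywx$ is a two-sided inverse of $1-yx$.

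The statement is essentially folklore and there is no real obstacle. The one point worth emphasising is that direct finiteness is invoked \emph{only} at the origin: for $\lambda\neq 0$ the classical identity already yields $\Sp(xy)\setminus\{0\}=\Sp(yx)\setminus\{0\}$ in every unital algebra, and the role of direct finiteness in this lemma is precisely to patch this equality across the point $0$.
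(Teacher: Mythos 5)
Your proof is correct and follows essentially the same route as the paper: direct finiteness is used to dispose of the point $0$ via part (i), and the classical identity $\Sp(xy)\setminus\{0\}=\Sp(yx)\setminus\{0\}$ (which you spell out and the paper cites as standard) handles all nonzero $\lambda$. The only cosmetic difference is in (i), where the paper applies direct finiteness once to $y$ and then writes $x=(xy)y^{-1}$, whereas you invoke it separately for $yz$ and for $y$; both are fine.
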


\begin{proof}
To prove (i): if there exists $h\in A$ such that $hxy=\id[A]$\/, then $y$ has a left inverse, and so -- since $A$ is directly finite -- $y$ is invertible in~$A$. Hence $x=xy\cdot y^{-1}$ is the product of two invertible elements, so is itself invertible.

In particular, if $xy$ is invertible then so is $yx$\/. It is a standard, basic result that $\Sp(xy)\setminus\{0\}=\Sp(yx)\setminus\{0\}$ in \emph{any} algebra, and thus (ii) follows.
\end{proof}

The main work needed to prove Theorem~\ref{t:headline} is contained in the following result, which to the author's knowledge is new.
\begin{thm}\label{t:on-cstar}
Let $A$ be a unital, directly finite $\Cst$-algebra.
Then $(A,A)$ is surjunctive.
\end{thm}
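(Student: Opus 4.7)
The plan is to invoke Lemma~\ref{l:basic}. Since $A$ is unital we have $\cu{A}=A$, so what must be shown is: if $a\in A$ and left multiplication $L_a:A\to A$, $x\mapsto ax$, is injective with closed range, then $L_a$ is surjective; equivalently, such an $a$ must be invertible in $A$.

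By the open mapping theorem there is $\delta>0$ with $\norm{ax}\geq\delta\norm{x}$ for every $x\in A$. The crux is to upgrade this bound from $a$ to the positive element $b:=a^*a$. Using submultiplicativity of the norm together with the $\Cst$-identity,
\[ \norm{bx}\cdot\norm{x} \ \geq\ \norm{x^*bx} \ =\ \norm{(ax)^*(ax)} \ =\ \norm{ax}^2 \ \geq\ \delta^2\norm{x}^2, \]
so $\norm{bx}\geq\delta^2\norm{x}$ for all $x\in A$.

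Next I would show $0\notin\Sp(b)$. Since $b$ is self-adjoint, if $0\in\Sp(b)$ then, working inside $\Cst(b,\id[A])\subseteq A$ via the continuous functional calculus, for any $\veps>0$ one can pick a continuous function $f$ on $\Sp(b)$ supported in a tiny neighbourhood of $0$ with $f(0)=1=\norm{f}_\infty$ and $\sup_t|tf(t)|<\veps$. Setting $x:=f(b)\in A$ then gives $\norm{x}=1$ but $\norm{bx}<\veps$, which contradicts the bound above once $\veps<\delta^2$. Hence $b=a^*a$ is invertible in $A$, and $(a^*a)^{-1}a^*$ is a left inverse for $a$. Direct finiteness of $A$ then forces $a$ itself to be invertible, as required.

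The main conceptual point to get right is that first step: a direct spectral attack on $L_a$ is awkward because $a$ is a general, not necessarily normal, element. Passing to $a^*a$ and invoking the $\Cst$-identity converts the problem into one about a positive element, where commutative functional calculus gives immediate control of the bottom of the spectrum; after that, direct finiteness of $A$ bridges the usual gap between "left invertible" and "invertible".
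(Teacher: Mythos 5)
Your proof is correct and follows essentially the same route as the paper's: both arguments reduce matters to the positive element $a^*a$, use the continuous functional calculus to produce approximate null vectors when $0\in\Sp(a^*a)$, and invoke direct finiteness to pass from one-sided to two-sided invertibility. The only difference is organizational: the paper argues contrapositively, using direct finiteness at the outset (via Lemma~\ref{l:dir-fin-prod-inv}) to place $0$ in $\Sp(a^*a)$ and then exhibiting an explicit sequence $y_n=f_n(a^*a)$ violating condition (iii) of Lemma~\ref{l:basic}, whereas you run the argument forwards (transferring the lower bound from $a$ to $a^*a$ via the $\Cst$-identity) and invoke direct finiteness only at the final step.
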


\begin{proof}
Let $a\in A$ and let $L_a: A\to A$ be given by $L_a(x)=ax$ for $x\in A$\/.
Suppose that $L_a$ is injective but not surjective. We shall construct an explicit sequence $(y_n)\subset A$ such that $\norm{y_n}\geq 1$ for all $n$ and $ay_n\to 0$\/.

We may suppose, \wLOG, that $\norm{a}=1$\/. Since $0\in\Sp(a)$, Lemma~\ref{l:dir-fin-prod-inv} implies that $0\in \Sp(a^*a)\subseteq [0,1]$\/.  
Let $(f_n)$ be a sequence in $C_{\Real}[0,1]$, to be determined later.
Put $y_n=f_n(a^*a)$\/. Then since $0\in \Sp(a^*a)$, we have
\[
\norm{y_n} = \norm{f_n(a^*a)}
  = \sup\{\abs{f_n(\lambda)} \;:\; \lambda\in \Sp(a^*a)\} \geq \abs{f_n(0)}\;;
\]
while, using the $\Cst$-identity and the continuous functional calculus, we obtain
\[ \begin{aligned}
\norm{ay_n}
  = \norm{(ay_n)^*ay_n}^{1/2}  
& = \norm{f_n(a^*a)a^* af_n(a^*a)}^{1/2} \\
& = \norm{a^*a f_n(a^*a)^2}^{1/2} \\
& = \sup_{\lambda\in \Sp(a^*a)} \abs{\lambda^{1/2} f_n(\lambda)} 
& \leq \sup_{0\leq \lambda\leq 1} \lambda^{1/2} \abs{f_n(\lambda)}\,.
\end{aligned} \]
The idea is now clear: take $f_n$ to satisfy $f_n(0)=1$ but to be `small outside a small neighbourhood of zero'. For sake of definiteness, put $f_n(t) = (1+nt^{1/2})^{-1}$\/: then $\norm{y_n} \geq \abs{f_n(0)} =1$ for all $n$, while
\[ \norm{ay_n} \leq \sup_{0\leq \lambda\leq 1} \frac{\lambda^{1/2}}{1+n\lambda^{1/2}} =\frac{1}{1+n} \to 0 \,.\]
This concludes the proof.
\end{proof}

\begin{proof}[Proof of Theorem~\ref{t:headline}]
The proof for the case $X=\Cst_r(\Gm)$ is immediate from Theorem~\ref{t:on-cstar}, since $\Cst_r(\Gm)$ is a directly finite $\Cst$-algebra.

It remains to treat the case of the noncommutative $L^p$-spaces. Thus, fix $1\leq p\leq\infty$, and let $X$ be the noncommutative $L^p$-space associated to $\VN(\Gm)$ (see the appendix for the definition). We now make two observations:
\begin{YCnum}
\item there is an
injective algebra homomorphism $\imath: \VN(\Gm)\to \Bdd(X)$;
\item $\imath$ has closed range, so that $\VN(\Gm)$ may be identified with a closed unital subalgebra of $\Bdd(X)$.
\end{YCnum}
These observations follow from some very basic noncommutative $L^p$-theory: as we have been unable to find a precise and concise reference for either, we have included sketch proofs and pointers to the literature in the appendix. Note however that in the case $X=\ell^2(\Gm)$ -- which, arguably, is the one of greatest interest here -- both observations are tautologically true. 

Since $\VN(\Gm)$ is a unital, directly finite $\Cst$-algebra, the pair $(\VN(\Gm),\VN(\Gm))$ is surjunctive by Theorem~\ref{t:on-cstar}. As $\imath$ has closed range, by applying Lemma~\ref{l:soi-meme} we deduce that the pair $(\imath(\VN(\Gm),X)$, and hence the pair $(\imath(\Cplx\Gm),X)$, is surjunctive. 

\end{proof}

\begin{rem}
It is not clear what can be said for the \emph{full} group $\Cst$-algebra~$\Cst(\Gm)$. In particular, it seems to be unknown if there exists a discrete group $\Gm$ such that $\Cst(\Gm)$ is \emph{not} directly finite. We content ourselves with two remarks:
\begin{YCnum}
\item If $\Gm$ is amenable, then $\Cst(\Gm)=\Cst_r(\Gm)$ is directly finite.
\item Let $\fg{2}$ denote the free group on $n$ generators. Then by
\cite[Theorem~7]{MDC_fullCF2}, there is an injective *-homomorphism
$\Cst(\fg{2}) \to \prod_{n=1}^\infty M_{2n}(\Cplx)$;
this larger algebra is directly finite, and therefore so is $\Cst(\fg{2})$\/.
\end{YCnum}
\end{rem}

We pause for some notation and definitions. If $X$ is a Banach space, we denote by $\cK(X)$ the algebra of all compact operators on $X$\/. A~$\Cst$-algebra $A$ is said to be \dt{CCR} or \dt{liminal} if, for every irreducible $*$-representation $\pi:A\to \Bdd(H_\pi)$, we have $\pi(A)\subseteq \cK(H_\pi)$. The class of locally compact groups whose reduced $\Cst$-algebras are CCR has been intensively studied: it clearly includes all compact and all abelian groups, but also includes all connected Lie groups which are either semisimple, nilpotent, or real-algebraic . For pointers to the rather large literature, see e.g.~\cite[Chapter~7]{Fol_AHAbook}.

If $A$ is a $\Cst$-algebra, not necessarily having an identity element, let $\cu{A}$ denote its `conditional' unitization with respect to some (hence any) faithful *-representation $A\to \Bdd(H)$\/. 

\begin{thm}\label{t:liminal}
Let $A$ be a CCR $\Cst$-algebra. Then $(A,\cu{A})$ is a surjunctive pair.
\end{thm}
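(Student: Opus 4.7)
The plan is to apply the criterion of Lemma~\ref{l:basic}: given $b \in \cu{A}$ such that $L_b:\cu{A}\to\cu{A}$ is injective with closed range, we must show $b$ is invertible in $\cu{A}$. The argument proceeds in three steps: the $\Cst$-identity produces a left inverse of $b$; the CCR hypothesis upgrades this to invertibility in every irreducible representation, with a uniform bound on the inverses; and a spectral-permanence argument assembles these local inverses into a global one.

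For the first step, suppose $\norm{bx}\ge\delta\norm{x}$ for all $x\in\cu{A}$. The $\Cst$-identity gives
\[
\norm{b^*b x}\,\norm{x} \ \ge\ \norm{x^*b^*b x}\ =\ \norm{bx}^2\ \ge\ \delta^2 \norm{x}^2,
\]
so $L_{b^*b}$ is bounded below on $\cu{A}$. Running the argument of Theorem~\ref{t:on-cstar} with the positive element $b^*b$ of the $\Cst$-algebra $\cu{A}$ (via the test elements $y_n = (1+n(b^*b)^{1/2})^{-1}$ obtained by continuous functional calculus) rules out $0\in\Sp(b^*b)$, so $b^*b$ is invertible in $\cu{A}$ and $s\defeq(b^*b)^{-1}b^*\in\cu{A}$ is a left inverse of $b$.

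Next, let $\tilde\pi:\cu{A}\to\Bdd(H_\pi)$ be any irreducible representation. The restriction $\tilde\pi\vert_A$ is either an irreducible representation of $A$ or vanishes (in which case $\tilde\pi$ is one-dimensional), so by the CCR hypothesis $\tilde\pi(\cu{A})\subseteq \cK(H_\pi)+\Cplx I$ in all cases. The identity $\tilde\pi(s)\tilde\pi(b)=I$ makes $\tilde\pi(b)$ bounded below in $\Bdd(H_\pi)$; writing $\tilde\pi(b)=K+\lambda I$ with $K\in\cK(H_\pi)$, the case $\dim H_\pi<\infty$ is immediate, and when $\dim H_\pi=\infty$ we must have $\lambda\neq 0$ (since no nonzero compact operator is bounded below on an infinite-dimensional Hilbert space), so $\tilde\pi(b)$ is a bounded-below Fredholm operator of index zero and is therefore invertible. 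Moreover
\[
\norm{\tilde\pi(b)^{-1}}^2 \ =\ \norm{\tilde\pi((b^*b)^{-1})}\ \le\ \norm{(b^*b)^{-1}},
\]
a bound independent of $\tilde\pi$.

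Finally, let $\pi\defeq\bigoplus_\alpha\tilde\pi_\alpha$ be a faithful direct sum of irreducible representations of $\cu{A}$ (e.g., the reduced atomic representation). The uniform bound just obtained lets us assemble $\bigoplus_\alpha\tilde\pi_\alpha(b)^{-1}$ into a bounded operator inverting $\pi(b)$ in $\Bdd(\bigoplus_\alpha H_{\pi_\alpha})$. Since $\pi$ is an isometric, unital $*$-embedding, spectral permanence gives $\pi(b)$ invertible in $\pi(\cu{A})$, hence $b$ invertible in $\cu{A}$, as required. The delicate point is the passage from ``$\tilde\pi(b)$ invertible for each irreducible $\tilde\pi$'' to ``$b$ invertible in $\cu{A}$'': this implication can fail for general $\Cst$-algebras (consider $\prod_n M_n(\Cplx)$), and what makes the direct-sum argument succeed here is precisely the uniform norm bound on the inverses, which in turn comes from the earlier invertibility of $b^*b$ in $\cu{A}$.
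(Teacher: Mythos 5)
Your proof is correct, but it takes a genuinely different route from the paper's. The paper reduces everything to showing that $\cu{A}$ is directly finite: assuming $ab=\id\neq ba$, it picks a pure state separating $(ba-\id)^*(ba-\id)$ from zero, passes to the associated GNS (irreducible) representation, lands inside $\cu{\cK(H_\psi)}$ by the CCR hypothesis, and contradicts Halperin's theorem that $\cu{\cK(X)}$ is directly finite; surjunctivity then follows by citing Theorem~\ref{t:on-cstar} as a black box. You instead work directly with the surjunctivity criterion: you re-run the functional-calculus estimate from the proof of Theorem~\ref{t:on-cstar} to get actual invertibility of $b^*b$ (hence an explicit left inverse $s=(b^*b)^{-1}b^*$), then replace Halperin's theorem by the elementary Hilbert-space facts that a compact operator is never bounded below in infinite dimensions and that an injective, closed-range Fredholm operator of index zero is invertible, and finally replace the pure-state separation by the reduced atomic representation together with inverse-closedness of $\Cst$-subalgebras. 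What your approach buys: it is self-contained (no appeal to Halperin or to the direct-finiteness reduction), and it yields a quantitative bound $\norm{b^{-1}}^2\leq\norm{(b^*b)^{-1}}$. What it costs: it is longer, and the final assembly step is slightly more elaborate than necessary --- once each $\tilde\pi_\alpha(b)$ is surjective, $\pi(b)$ has dense range, which together with the closed range coming from $sb=\id$ already gives surjectivity, so the uniform bound on the local inverses, while correct and correctly derived, is not strictly needed. Your closing remark about why pointwise invertibility over irreducibles does not suffice in general is accurate and worth keeping.
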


We have included this result here, since the proof goes via Theorem~\ref{t:on-cstar}. The other ingredient in the proof is the following result of Halperin~\cite{Halp_onBer}.

\begin{thm}\label{t:thinop-dirfin}
$\cu{\cK(X)}$ is directly finite for any Banach space~$X$.
\end{thm}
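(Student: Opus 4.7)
My plan is to reduce direct finiteness of $\cu{\cK(X)}$ to the classical Riesz--Schauder theorem, which asserts that for any compact operator $K \in \cK(X)$ the operator $I+K$ is Fredholm of index zero. The case $\dim X < \infty$ is trivial ($\cu{\cK(X)} = \Bdd(X)$ is a matrix algebra, directly finite by linear algebra), so I assume throughout that $X$ is infinite-dimensional. This assumption ensures that $I \notin \cK(X)$, so $\cu{\cK(X)} = \Cplx I \oplus \cK(X)$ as a vector space.

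Suppose $S, T \in \cu{\cK(X)}$ satisfy $ST = I$, and decompose $T = \lambda I + K$, $S = \mu I + L$ with $K, L \in \cK(X)$ and $\lambda, \mu \in \Cplx$. Expanding the product,
\[
I = ST = \lambda\mu\, I + (\mu K + \lambda L + LK),
\]
where the bracketed term lies in $\cK(X)$. By the direct-sum decomposition above, this forces $\lambda\mu = 1$ and $\mu K + \lambda L + LK = 0$. In particular $\lambda \neq 0$, and by rescaling ($T \mapsto \lambda^{-1}T$, $S \mapsto \lambda S$) I may assume \wLOG{} that $\lambda = \mu = 1$. The relation $K + L + LK = 0$ then rearranges to $(I+L)(I+K) = I$, so $S = I+L$ is a left inverse of $T = I+K$ in $\Bdd(X)$.

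It remains to upgrade left-invertibility to invertibility. The equation $(I+L)T = I$ already forces $\ker T = 0$. By Riesz--Schauder applied to the compact operator $K$, the operator $T = I+K$ is Fredholm with index zero, so its range is closed and $\dim \ker T = \dim \operatorname{coker} T < \infty$. Injectivity thus yields $\operatorname{coker} T = 0$, whence $T$ is surjective as well; the open mapping theorem provides $T^{-1} \in \Bdd(X)$. From $(I+L)T = I$ we then recover $T^{-1} = I+L \in \cu{\cK(X)}$, proving direct finiteness.

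The only non-algebraic input is the Riesz--Schauder/Fredholm alternative for compact perturbations of $I$; everything else is a bookkeeping exercise in separating scalar and compact parts. I do not anticipate any serious obstacle, since the hypothesis $S \in \cu{\cK(X)}$ is precisely what is needed to force $T$ into the form $\lambda(I + \lambda^{-1}K)$ with $\lambda \neq 0$, after which the classical theory takes over.
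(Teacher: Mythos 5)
Your argument is correct. Note, though, that the paper does not actually prove this statement: it is quoted as a theorem of Halperin and used as a black box, with the paper instead proving the slightly stronger Theorem~\ref{t:cpct-surjunc} (that $(\cK(X),X)$ is surjunctive) ``using the same kind of basic spectral theory as is implicit'' in Halperin's note. Your route and the paper's Theorem~\ref{t:cpct-surjunc} both rest on the same input --- Riesz--Schauder theory for $I+K$ --- but package it differently: the paper shows that an injective, non-invertible $T=\lambda I+K$ must have $\lambda=0$ (since a nonzero point of $\Sp(K)$ that is not an eigenvalue cannot exist) and hence non-closed range, which gives surjunctivity and then direct finiteness via the Proposition in Section~2; you instead extract $\lambda\neq 0$ algebraically from $ST=I$ and the decomposition $\cu{\cK(X)}=\Cplx I\oplus\cK(X)$, and then invoke index zero. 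Two small remarks: the step ``$K+L+LK=0$ rearranges to $(I+L)(I+K)=I$'' merely re-derives $ST=I$ after rescaling, so it can be omitted; and $\lambda\neq 0$ follows even more quickly from the observation that $\lambda=0$ would make $I=ST$ compact, impossible in infinite dimensions. Your proof establishes only direct finiteness, not the stronger surjunctivity of $(\cK(X),X)$, but that is all the stated theorem asks for.
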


\begin{proof}[Proof of Theorem~\ref{t:liminal}]
Let $\id$ denote the identity of $\cu{A}$\/. By Theorem~\ref{t:on-cstar}, it suffices to prove that $\cu{A}$ is directly finite. Suppose otherwise: then there exist $a,b\in \cu{A}$ such that $ab=\id\neq ba$\/. Let $\psi$ be a pure state on $\cu{A}$ such that
\begin{equation}\label{eq:separation}
\psi((ba-\id)^*(ba-\id)) \neq 0\,,
\end{equation}
and let $\pi_\psi: \cu{A} \to \Bdd(H_\psi)$ be the corresponding GNS representation. Then $\pi_\psi$ is an irreducible *-representation of $\cu{A}$ (see~e.g.~\cite[Theorem~10.2.3]{KadRin_v2}) and it is easily checked that it is an irreducible *-representation of $A$\/; since $A$ is liminal, we have $\pi_\psi(\cu{A})\subseteq \cu{\cK(H_\psi)}$\/. We have $\pi_\psi(a)\pi_\psi(b) = \pi_\psi(\id) = I_{H_\psi}$\/, while it follows from \eqref{eq:separation} and the GNS construction that
\[ \pi_\psi(b)\pi_\psi(a)-I_{H_\psi} = \pi_\psi(ba-\id) \neq 0\,.\]
But this contradicts Theorem~\ref{t:thinop-dirfin}, and therefore $\cu{A}$ must be directly finite.
\end{proof}

Going back to Halperin's result, we can in fact prove something a little stronger, using the same kind of basic spectral theory as is implicit in~\cite{Halp_onBer}.

\begin{thm}\label{t:cpct-surjunc}
The pair $(\cK(X), X)$ is surjunctive.
\end{thm}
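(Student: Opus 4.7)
The plan is to verify condition~(ii) of Lemma~\ref{l:basic}: every $T\in\cu{\cK(X)}$ that is injective with closed range should be surjective. Any such $T$ can be written as $T=K+\lambda I$ with $K\in\cK(X)$ and $\lambda\in\Cplx$, and the natural strategy is to split on whether $\lambda=0$ or $\lambda\neq 0$.

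The case $\lambda\neq 0$ is just the classical Fredholm alternative: $T=\lambda(I+\lambda^{-1}K)$ is a scalar multiple of a compact perturbation of the identity, hence a Fredholm operator of index zero. Injectivity of $T$ then immediately forces $\dim\ker T=\dim\mathrm{coker}\,T=0$, so $T$ is surjective (and the closed-range hypothesis is not even needed here).

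The case $\lambda=0$, i.e.\ $T=K$ itself compact, is where the closed-range hypothesis does real work. Suppose $K$ is injective and $K(X)$ is closed. Viewed as a bounded bijection $K:X\to K(X)$ between Banach spaces, the open mapping theorem gives an isomorphism; in particular $K(B_X)$ is an open neighbourhood of $0$ inside $K(X)$. But $K(B_X)$ is also relatively compact in norm, by compactness of $K$. A Banach space possessing a relatively compact open neighbourhood of $0$ is finite-dimensional (the classical theorem of Riesz), so $K(X)$ is finite-dimensional, and hence so is $X\iso K(X)$. An injective endomorphism of a finite-dimensional vector space is automatically surjective, completing this case.

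So the entire proof is a straightforward packaging of two standard facts from the spectral theory of compact operators, matching the remark in the text that this sharpens Halperin's theorem via ``the same kind of basic spectral theory.'' I do not anticipate any serious obstacle; the only point requiring care is remembering that the two scalar cases need genuinely different arguments — Fredholm index zero for $\lambda\neq 0$, and an open-mapping-plus-Riesz argument forcing $X$ to be finite-dimensional when $\lambda=0$.
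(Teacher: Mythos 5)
Your proof is correct and follows essentially the same route as the paper: the author likewise reduces to the dichotomy $\lambda\neq 0$ versus $\lambda=0$, using that a nonzero point of $\Sp(K)$ which is not an eigenvalue cannot exist for compact $K$ (your Fredholm-alternative step), and that an injective compact operator on an infinite-dimensional space cannot have closed range (your open-mapping-plus-Riesz step). No gaps.
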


\begin{proof}
If $X$ is finite-dimensional, the result is trivially true (all injective endomorphisms of a finite-dimensional vector space are surjective).
Thus we may assume, \wLOG, that $X$ is infinite-dimensional.

Let $T= \lambda I + K \in \cK(X)$, where $\lambda\in\Cplx$ and $K$ is a compact operator. Suppose that $T$ is injective and non-invertible. Then $-\lambda\in\Sp(K)$; moreover, since $-\lambda$ is by assumption not an eigenvalue of $K$\/, standard spectral theory for compact operators (see e.g.~\cite[Theorem 4.25]{Rud_FA}) forces $\lambda=0$\/. Since $X$ is infinite-dimensional and $K=T$ is injective, the range of $K$ is infinite-dimensional, and so (by the open mapping theorem) cannot be closed in~$X$\/.
\end{proof}

\begin{eg}\label{eg:SOTclosure-not-surjunctive}
We can now give the example promised earlier in Remark~\ref{r:RumDoodle}.
Let $X$ be a Banach space with
the approximation property, containing a proper closed subspace isomorphic to itself. For instance, any infinite-dimensional $\ell^p$-space will suffice.
We have just seen that $(\cK(X),X)$ is surjunctive. Now, since $X$ has the approximation property, the closure of $\cK(X)$ in the strong operator topology is all of $\Bdd(X)$. Since $X$ contains a proper isomorphic copy of itself, $(\Bdd(X),X)$ is evidently not surjunctive.
\end{eg}

\begin{rem}
In Theorem~\ref{t:liminal}, we cannot relax ``CCR'' to ``GCR''.
For instance, the Toeplitz algebra is GCR, but (since it contains non-unitary isometries) is not even directly finite, and hence by Theorem~\ref{t:on-cstar} it cannot arise in any surjunctive pair.
\end{rem}

\end{section}

\begin{section}{The case of \texorpdfstring{$\ell^p(\Gm)$}{l-p(Gamma)} for \texorpdfstring{$p\neq 2$}{p!=2}}
Given the results obtained so far, it is tempting to wonder if, whenever $A$ is a unital and directly finite subalgebra of $\Bdd(X)$, the pair $(A,X)$ is surjunctive. The following example, which is a special case of an old construction due to Willis, shows that this is not the case.

\begin{prop}[see {\cite[Theorem 2.2]{GAW_JAuMS86b}}]
Let $\Gm$ be a discrete group, which contains two elements $a$ and $b$ that generate a free subgroup; let $t_a$ and $t_b$ be complex scalars of modulus~$1$, and put $x= \delta_e+t_a\delta_e+t_b\delta_b\in\Cplx\Gm$\/.
Let $L_x:\ell^p(\Gm)\to\ell^p(\Gm)$ denote the left convolution operator. Then:
\begin{YCnum}
\item if $1\leq p < 2$, then $L_x$ has a left inverse in $\Bdd(\ell^p(\Gm))$, and in particular is injective with closed range;
\item if $1+t_a+t_b=0$, then $\delta_e \notin L_x(\ell^1(\Gm))$.
\end{YCnum}
\end{prop}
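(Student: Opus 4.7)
Part (ii) is the quick one: the augmentation $\veps\colon\ell^1(\Gm)\to\Cplx$, $\veps(f)\defeq\sum_{g\in\Gm}f(g)$, is a continuous unital homomorphism for the convolution product, and the hypothesis $1+t_a+t_b=0$ says exactly that $\veps(x)=0$. Hence $\veps(L_xy)=\veps(x)\veps(y)=0$ for every $y\in\ell^1(\Gm)$, so $L_x(\ell^1(\Gm))\subseteq\ker\veps$; since $\veps(\delta_e)=1$ this forces $\delta_e\notin L_x(\ell^1(\Gm))$.

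For (i) my plan is to follow Willis's construction from the cited paper. The first step is a reduction to the case $\Gm=\langle a,b\rangle\iso\fg{2}$: choosing a set $T$ of right coset representatives for $H\defeq\langle a,b\rangle\le\Gm$, we have $\ell^p(\Gm)\iso\bigoplus_{t\in T}\ell^p(Ht)$ as an $\ell^p$-direct sum with each $\ell^p(Ht)\iso\ell^p(H)$. Because $x$ is supported in $H$, left-convolution by $x$ respects this decomposition and acts fibrewise as a copy of $L_x$ on $\ell^p(H)\iso\ell^p(\fg{2})$; a bounded left inverse on $\ell^p(\fg{2})$ therefore assembles (diagonally) into one on $\ell^p(\Gm)$.

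It remains to exhibit a bounded $S\colon\ell^p(\fg{2})\to\ell^p(\fg{2})$ with $SL_x=I$ when $1\le p<2$. Formally, the left inverse of $x=\delta_e+t_a\delta_a+t_b\delta_b$ in the convolution algebra is the Neumann series $y=\sum_{n\ge0}(-1)^n(t_a\delta_a+t_b\delta_b)^n$, which is supported on the positive free sub-semigroup $\{a,b\}^*$ with every coefficient of modulus~$1$; in particular $y\notin\ell^r$ for any $r<\infty$, so $S$ cannot be realised as convolution by~$y$. Instead, I would exploit the paradoxical decomposition $\{a,b\}^*=\{e\}\sqcup a\{a,b\}^*\sqcup b\{a,b\}^*$ and the tree structure of the Cayley graph of $\fg{2}$ to assemble $S$ by grouping the terms of the formal series according to the leading positive-letter prefix of the reduced word at which they are evaluated; the identity $SL_x=I$ then falls out of the telescoping built into the Neumann expansion.

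The real obstacle is the $\ell^p$-boundedness of $S$. The naive coordinate-wise bound grows like $2^{n/p}$ at depth~$n$, reflecting the exponential growth of $\fg{2}$, and so genuine cancellation among the $2^n$ terms at each level must be extracted. This is the step where the restriction $p<2$ enters essentially: at $p=2$ the operator $L_x$ already lies in the directly finite algebra $\VN(\fg{2})$, so by Theorem~\ref{t:headline} it admits no left inverse that fails to be a full inverse, signalling that the cancellation estimates available for $p<2$ must genuinely break down at the endpoint. Granted the boundedness of $S$, the ``in particular'' clause in (i) is automatic, since any left-invertible bounded operator is injective with closed range.
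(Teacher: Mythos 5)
Part (ii) of your argument is complete and correct: the augmentation character kills $x$ when $1+t_a+t_b=0$, and this is arguably cleaner than anything in the cited source. (For the record, the paper itself offers no proof of this proposition at all --- it is quoted verbatim from Willis \cite[Theorem 2.2]{GAW_JAuMS86b} --- so the only meaningful comparison is between your argument and Willis's.) Your reduction of part (i) to the case $\Gm=\fg{2}$ via the right-coset decomposition $\ell^p(\Gm)\iso\bigoplus_{t}\ell^p(Ht)$ is also sound, since $x$ is supported in $H=\langle a,b\rangle$ and left convolution preserves each summand isometrically conjugate to its action on $\ell^p(H)$.

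The problem is that everything after that reduction is a plan rather than a proof, and the step you defer is precisely the theorem. You correctly observe that the formal Neumann inverse $y=\sum_n(-1)^n(t_a\delta_a+t_b\delta_b)^n$ lies in no $\ell^r$, so the left inverse cannot be a convolution operator; you then say you would ``assemble $S$ by grouping the terms \dots\ according to the leading positive-letter prefix'' and that the identity $S L_x=I$ ``falls out of the telescoping.'' But no such operator $S$ is actually defined: the grouping is not specified, the telescoping identity is only verified (implicitly) on finitely supported vectors, and --- as you yourself concede --- the $\ell^p$-boundedness of $S$ for $1\le p<2$ is left entirely open. That boundedness is not a technical afterthought; the naive bound of $2^{n/p}$ per level that you mention is genuinely divergent, and extracting the cancellation that defeats it is the whole content of Willis's Theorem 2.2 (his construction uses the tree structure of the Cayley graph of $\fg{2}$ to ensure that, for each reduced word, only a bounded amount of mass is collected at each level, which is where $p<2$ enters quantitatively). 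Your remark that the estimate must fail at $p=2$ because $\VN(\fg{2})$ is directly finite is a correct consistency check, but it is evidence that a proof would have to use $p<2$, not a proof that one exists. As it stands, part (i) is not established.
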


\begin{cor}
If $\Gm$ contains a copy of $\fg{2}$, then $(\Cplx{\Gm}, \ell^1(\Gm))$ is not a surjunctive pair.
\end{cor}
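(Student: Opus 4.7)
The strategy is to directly instantiate the preceding proposition so as to produce an element of $L_\bullet(\Cplx\Gm) \subseteq \cu{L_\bullet(\Cplx\Gm)}$ which is injective with closed range on $\ell^1(\Gm)$ but fails to be surjective; by Lemma~\ref{l:basic}(ii) this will immediately contradict surjunctivity of the pair $(\Cplx\Gm,\ell^1(\Gm))$.

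First I would fix a copy of $\fg{2}$ sitting inside $\Gm$ by choosing generators $a,b\in\Gm$ that generate a free subgroup. Next I would choose the scalars $t_a$ and $t_b$ so as to satisfy simultaneously both hypotheses of the proposition. The natural choice is to take $t_a$ and $t_b$ to be the two non-trivial cube roots of unity, i.e.\ $t_a=e^{2\pi i/3}$ and $t_b=e^{4\pi i/3}$; then $|t_a|=|t_b|=1$ and $1+t_a+t_b=0$, so both (i) and (ii) apply.

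Setting $x\defeq \delta_e+t_a\delta_a+t_b\delta_b \in\Cplx\Gm$, part~(i) applied with $p=1$ gives that $L_x: \ell^1(\Gm)\to \ell^1(\Gm)$ has a left inverse in $\Bdd(\ell^1(\Gm))$, so in particular $L_x$ is injective with closed range. On the other hand, part~(ii) says that $\delta_e\notin L_x(\ell^1(\Gm))$, so $L_x$ is \emph{not} surjective. Since $L_x$ lies in $L_\blob(\Cplx\Gm)$, which is contained in its own unitization, the equivalence (i)$\Leftrightarrow$(ii) of Lemma~\ref{l:basic} is violated, proving that $(\Cplx\Gm,\ell^1(\Gm))$ is not a surjunctive pair.

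There is really no main obstacle here beyond ensuring that a single choice of unimodular scalars $t_a,t_b$ can satisfy the two numerical conditions of the proposition at once, which the cube-root-of-unity choice does in an obvious way. The only small bookkeeping point is to check that $x$ lies in $\Cplx\Gm$ (rather than merely in $\ell^1(\Gm)$), so that $L_x$ really does belong to the algebra $\imath(\Cplx\Gm)$ whose surjunctivity is under consideration — but this is immediate since $x$ is a finite linear combination of point masses.
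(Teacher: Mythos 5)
Your proposal is correct and is precisely the argument the paper intends (the corollary is stated without proof as an immediate consequence of Willis' proposition): choosing unimodular $t_a,t_b$ with $1+t_a+t_b=0$, such as the nontrivial cube roots of unity, makes $L_x$ injective with closed range on $\ell^1(\Gm)$ by part~(i) yet non-surjective by part~(ii), contradicting Lemma~\ref{l:basic}(ii). Your silent correction of the typo $t_a\delta_e$ to $t_a\delta_a$ in the definition of $x$ is also the right reading.
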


We are thus faced with the following question: for which groups $\Gamma$ is the pair $(\Cplx\Gamma, \ell^1(\Gm))$ surjunctive?
Note that by Runde's result (Theorem~\ref{t:Run_decMoore} above), all discrete Moore groups have this property; while Willis' result shows that any discrete group containing $\fg{2}$ does not have this property.
This suggests that \emph{amenability} may be the distinguishing characteristic; while we have not been able to confirm or disprove this guess, we do at least have the following theorem.

\begin{thm}\label{t:amenable}
Let $\Gm$ be a discrete amenable group,
let $1\leq p <\infty$, and let $a\in\ell^1(\Gm)$.
Let $\CV{p}(\Gm)$ denote the subalgebra of $\Bdd(\ell^p(\Gm))$ consisting of all operators that commute with right translations.

Suppose that $L_a:\ell^p(\Gm)\to\ell^p(\Gm)$ is injective and has \emph{complemented} range. Then $L_a$ is invertible in $\CV{p}(\Gm)$, and in particular is surjective. 
\end{thm}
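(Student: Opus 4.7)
The plan is to use amenability to promote a left inverse of $L_a$ from $\Bdd(\ell^p(\Gm))$ into $\CV{p}(\Gm)$, and then to boost this one-sided inverse to a two-sided one by passing to the von Neumann algebra $\VN(\Gm)$, where direct finiteness (Theorem~\ref{t:Kap}) is available.

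The open mapping theorem together with the complementation hypothesis supplies a continuous projection $P\in\Bdd(\ell^p(\Gm))$ with image $L_a(\ell^p(\Gm))$, so that $S \defeq L_a^{-1}\circ P$ is a bounded left inverse of $L_a$. To replace $S$ by a left inverse lying in $\CV{p}(\Gm)$, I would fix a F\o lner sequence $(F_n)$ for $\Gm$ and form
\[ S_n \defeq \frac{1}{|F_n|}\sum_{g\in F_n}\rho_g\,S\,\rho_{g^{-1}}. \]
Since the $\rho_g$ are isometries of $\ell^p(\Gm)$, each $S_n$ has norm at most $\norm{S}$; and since $L_a$ commutes with every $\rho_g$, one has $S_nL_a = I$ for all $n$. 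A cluster point $\tilde S$ of $(S_n)$ in a suitable weak topology (the weak operator topology when $1<p<\infty$, using reflexivity of $\ell^p$; or the weak-* topology on the dual Banach space $\Bdd(\ell^1)$ when $p=1$) inherits $\tilde S L_a = I$; and the F\o lner condition forces $\rho_h\tilde S\rho_{h^{-1}} = \tilde S$ for each $h\in\Gm$, so $\tilde S\in\CV{p}(\Gm)$.

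To upgrade $\tilde S$ to a two-sided inverse, I would invoke Herz's majorization theorem for amenable groups, which yields a contractive, unital, injective algebra homomorphism $\iota:\CV{p}(\Gm)\to\VN(\Gm)$ extending the identity on $\Cplx\Gm$. (The case $2<p<\infty$ reduces to $1\leq p\leq 2$ via the natural $p\leftrightarrow p'$ isomorphism of $\CV{p}(\Gm)$ with $\CV{p'}(\Gm)$.) Applying $\iota$ to $\tilde SL_a=I$ gives $\iota(\tilde S)\iota(L_a)=I$ in $\VN(\Gm)$; Kaplansky's theorem then forces $\iota(L_a)\iota(\tilde S)=I$, whence by injectivity of $\iota$ we obtain $L_a\tilde S=I$ in $\CV{p}(\Gm)$. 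The surjectivity of $L_a$ on $\ell^p(\Gm)$ follows immediately.

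The main obstacle is the last step: Herz's inequality $\norm{a}_{\CV{2}}\leq\norm{a}_{\CV{p}}$ for $a\in\Cplx\Gm$ only directly supplies a contractive map on the norm-closure of $\Cplx\Gm$ inside $\CV{p}(\Gm)$, which can be strictly smaller than $\CV{p}(\Gm)$. Promoting this to a bona fide injective, unital, multiplicative map on all of $\CV{p}(\Gm)$ requires one to work with weak-* topologies and to use separate weak-* continuity of multiplication, exploiting the predual $A_p(\Gm)$ of the Fig\`a-Talamanca--Herz algebra. Once this machinery is in place, the argument sketched above goes through.
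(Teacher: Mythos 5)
Your proof is correct, but the first half takes a genuinely different route from the paper. Where you exploit amenability through a F\o lner sequence, averaging the bounded left inverse $S=L_a^{-1}P$ over conjugation by the isometries $\rho_g$ and extracting a WOT (resp.\ weak-$*$) cluster point $\tilde S\in\CV{p}(\Gm)$ with $\tilde S L_a=I$, the paper instead invokes amenability of the Banach algebra $\ell^1(\Gm)$ via a Curtis--Loy averaging lemma: it replaces the given projection onto $V=L_a(\ell^p(\Gm))$ by one that is a right $\ell^1(\Gm)$-module map $P$, from which it extracts an explicit element $\xi\in\ell^p(\Gm)$ with $a*\xi=P(\delta_e)$ and hence $\xi*a=\delta_e$, and then verifies directly that $L_\xi$ is bounded on $\ell^p(\Gm)$ and lies in $\CV{p}(\Gm)$. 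Your version is arguably more elementary (no module-theoretic machinery, just isometric conjugation and weak compactness -- though you do need the slightly fiddly identification of $\Bdd(\ell^1(\Gm))$ as a dual space when $p=1$), while the paper's version buys an explicit convolution inverse; in particular for $p=1$ it gets $\xi\in\ell^1(\Gm)$ and concludes by direct finiteness of $\ell^1(\Gm)$ alone, with no appeal to Herz, whereas you route $p=1$ through $\VN(\Gm)$ as well (harmless, since $\CV{1}(\Gm)=\ell^1(\Gm)$). The second half of your argument -- Herz's majorization giving an injective unital homomorphism $\CV{p}(\Gm)\to\VN(\Gm)$, followed by Kaplansky's direct finiteness and injectivity to upgrade the one-sided inverse -- is exactly the paper's. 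The caveat you raise about whether Herz's inequality applies to all of $\CV{p}(\Gm)$ rather than just the closure of $\Cplx\Gm$ is legitimate, but the paper resolves it the same way you propose to: by citing Herz's theorems, which are stated for arbitrary elements of $\CV{p}(\Gm)$.
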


Part of the proof uses the following standard property of modules over amenable algebras, whose proof we omit. See \cite[Theorem~2.3]{CuLo_amen} for a fairly direct argument.

\begin{lem}\label{l:reductive}
Let $A$ be an amenable Banach algebra, let $Y$ be a right Banach $A$-module, and $V$ a closed $A$-submodule of $Y$\/. Suppose that
\begin{YCnum}
\item there exists a bounded linear projection of $Y$ onto $V$\/;
\item $V$ is a dual Banach $A$-module (that is, there exists a left Banach $A$-module $X$ and a continuous $A$-module isomorphism $V\iso X^*$).
\end{YCnum}
Then there exists a bounded linear projection $P:Y\to V$ which is also a right $A$-module map.
\end{lem}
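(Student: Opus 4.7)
The plan is to follow the standard cohomological strategy for amenable Banach algebras: given the space-level projection $Q\colon Y\to V$ from hypothesis~(i), I would produce the desired module projection $P$ in the form $P=Q-T$, where $T$ is chosen precisely to kill the failure of $Q$ to intertwine the right $A$-action.

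First, endow $\Bdd(Y,V)$ with the Banach $A$-bimodule structure
\[ (a\cdot T)(y) \defeq T(ya), \qquad (T\cdot a)(y) \defeq T(y)\cdot a, \]
where the second formula uses the given right $A$-action on $V$. Let $\Bdd_0(Y,V)\subseteq \Bdd(Y,V)$ denote the closed subbimodule of operators vanishing on $V$; this is canonically identified with $\Bdd(Y/V,V)$. Set $d(a)\defeq a\cdot Q - Q\cdot a$. Because $Q\vert_V = \id[V]$, each $d(a)$ vanishes on $V$, so $d$ indeed maps $A\to\Bdd_0(Y,V)$, and associativity of the two actions turns $d$ into a bounded derivation.

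Next, I would use hypothesis~(ii) to observe that $\Bdd_0(Y,V)$ is itself a \emph{dual} Banach $A$-bimodule. Writing $V\iso X^*$ and giving $Y/V\ptp X$ the bimodule structure $a\cdot(\bar y\tp x)\cdot b = \bar y\cdot b\tp a\cdot x$, the standard identifications $\Bdd_0(Y,V)\iso\Bdd(Y/V,X^*)\iso(Y/V\ptp X)^*$ are then all $A$-bimodule isomorphisms. Amenability of $A$ asserts exactly that $H^1(A,M^*)=0$ for every dual Banach $A$-bimodule $M^*$, so the derivation $d$ is inner: there exists $T\in\Bdd_0(Y,V)$ with $d(a)=a\cdot T-T\cdot a$ for every $a\in A$.

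Setting $P\defeq Q-T$, one has $P\vert_V = Q\vert_V - T\vert_V = \id[V]$, so $P$ is still a bounded linear projection onto $V$; and $a\cdot P - P\cdot a = d(a)-(a\cdot T-T\cdot a)=0$, which unwinds to $P(ya)=P(y)\cdot a$ for all $a\in A$ and $y\in Y$. The one slightly delicate step is the dual-module identification: one must verify that the proposed bimodule structure on $Y/V\ptp X$ is bounded, and that the standard duality $\Bdd(Y/V,X^*)\iso(Y/V\ptp X)^*$ respects both sides of the $A$-action with the sign conventions used to define $d$. Once that bookkeeping is done, amenability supplies the inner derivation and hence the projection $P$.
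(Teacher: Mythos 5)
Your argument is correct: the derivation $d(a)=a\cdot Q-Q\cdot a$ does take values in $\Bdd_0(Y,V)\iso\Bdd(Y/V,V)\iso(Y/V\ptp X)^*$ with the bimodule identifications you describe, and subtracting an implementing element $T$ yields the required module projection. The paper itself omits the proof and simply cites \cite[Theorem~2.3]{CuLo_amen}; your proof is essentially the standard averaging-via-vanishing-of-$H^1$ argument that that reference carries out, so there is nothing to add beyond the bookkeeping you already flag (which does check out, including the sign conventions making the dual actions on $(Y/V\ptp X)^*$ match those on $\Bdd_0(Y,V)$).
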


It is also convenient to use the following notation: given $c\in\ell^p(\Gm)$\/, we denote by $L_c$ the bounded operator $\ell^1(\Gm)\to \ell^p(\Gm)$ that is given by left convolution with~$c$\/.

\begin{proof}[Proof of Theorem~\ref{t:amenable}]
We start by noting that $\ell^p(\Gm)$, equipped with the usual left action of $\Gm$, is a dual Banach $\ell^1$-module.
Let $V=L_a(\ell^p(\Gm))$\/: by hypothesis this is a closed, right $\ell^1(\Gm)$-submodule of $\ell^p(\Gm)$\/, and it is a dual module, since $L_a$ defines a module isomorphism between $V$ and $\ell^p(\Gm)$\/.
Also, by hypothesis there exists a bounded linear projection of $\ell^p(\Gm)$ onto $V$\/; hence on applying Lemma~\ref{l:reductive},
there exists a bounded linear projection $P$ from $\ell^p(\Gm)$ onto the closed subspace~$V$\/, satisfying
\[ P(\eta*b)=P(\eta)*b \qquad\text{ for all $\eta\in \ell^p(\Gm)$ and all $b\in \ell^1(\Gm)$.} \]
In particular, for each $\eta\in\ell^1(\Gm)$\/, we have $P(\eta)=P(\delta_e)*\eta$.
Since $P(\delta_e)\in V=L_a(\ell^p(\Gm))$\/, there exists a unique $\xi\in \ell^p(\Gm)$ such that $a*\xi=P(\delta_e)$\/. Then $a*\xi*a=P(\delta_e)*a=a$\/, and so by injectivity of $L_a$ we have $\delta_e=\xi*a$\/.

\medskip
We now consider the cases  $p=1$ and $1<p<\infty$ separately.
If $p=1$\/, then $\xi\in\ell^1(\Gm)$\/, so that $a$ is left-invertible in the algebra $\ell^1(\Gm)$\/; since this algebra is directly finite, $a$ is invertible in $\ell^1(\Gm)$ with inverse $\xi$\/.

For the remainder of this proof, we restrict attention to the case $1<p<\infty$. Here, the idea is similar to the case $p=1$, 
but since we only know at the outset that $\xi\in\ell^p(\Gm)$\/, we need to work harder (and obtain a weaker result).
The first step is to note that $L_\xi\in \CV{p}(\Gm)$\/: for, the assumption on $a$ implies there exists $\delta>0$ such that
\[ \norm{a*\psi}_p \geq \delta \norm{\psi}_p \quad\text{ for all $\psi\in\ell^p(\Gm)$,} \]
and in particular, since for every $\eta\in\ell^1(\Gm)$ we have
$a*\xi*\eta = P(\delta_e)*\eta = P(\eta)$,
we find that
\[ \norm{\xi*\eta}_p \leq \delta^{-1} \norm{a*\xi*\eta}_p \leq \delta^{-1}\norm{P(\eta)}_p \leq \delta^{-1}\norm{P}\norm{\eta}_p\,, \]
as claimed.

Since $\Gm$ is amenable, a result of Herz tells us that there exists $C_p>0$ such that
\[ \norm{T\eta}_2 \leq C_p\norm{T}_{\ell^p\to\ell^p} \norm{\eta}_2 \qquad\text{ for all $\eta\in c_{00}(\Gm)$
 and all $T\in\CV{p}(\Gm)$} \]
(see~\cite[Theorem~C]{Herz_p-space} and \cite[Theorem 5]{Herz_AIF73} for details).
Each operator in $\CV{p}(\Gm)$ can therefore be identified with a unique operator on $\ell^2(\Gm)$, which, since it commutes with right translations, will be an element of the group von Neumann algebra $\VN(\Gm)$. This gives us an injective, unital algebra homomorphism from $\CV{p}(\Gm)$ into $\VN(\Gm)$, and since the latter is directly finite we see that $\CV{p}(\Gm)$ is also directly finite. Since $L_a L_\xi=I$ in $\CV{p}(\Gm)$, we conclude that $L_a$ is invertible in $\CV{p}(\Gm)$ with inverse $L_\xi$\/.
\end{proof}

\end{section}

\begin{section}{Closing questions and further work}
While the preceding results give complete answers for the regular representation of a discrete group on its $\ell^2$-space, we have seen that the problem of determining surjunctivity of $(\Cplx\Gm,\ell^p(\Gm))$ remains unresolved for $p\neq 2$ and non-Moore groups. We therefore close with a list of questions which this work has raised, and which we hope may stimulate further investigation.

\begin{qn}
Let $1<p<\infty$, $p\neq 2$\/.
Is $(\Cplx\fg{2},\ell^p(\fg{2}))$ surjunctive?
\end{qn}

\begin{qn}\label{q:Tessera}
Let $H_3(\Zahl)$ denote the $3$-dimensional, integer Heisenberg group. Is
$(\Cplx H_3(\Zahl), \ell^1(H_3(\Zahl)))$ surjunctive? What about $\ell^p(H_3(\Zahl))$ for $1<p<\infty$, $p\neq 2$\/?
\end{qn}

There are several reasons for considering $H_3(\Zahl)$\/. Firstly, as a two-step nilpotent group it is perhaps the next step into noncommutativity after Moore groups (by results of Thoma, every discrete Moore group has an abelian normal subgroup of finite index, see~\cite[Theorem~7.8]{Fol_AHAbook}). Secondly, in view of the role played by spectral arguments and functional calculus in Section~\ref{s:cstar}, it may be relevant that  $\ell^1(H_3(\Zahl))$ admits a $C^k$-functional calculus for some $k<\infty$\/, by results of Dixmier; however, it is not clear if this can be used to construct approximate eigenvectors, as done for the case of the $\Cst$-norm.

\begin{qn}
The result of Runde that was mentioned in the introduction implies, as a very special case, that $(\ell^1(\Gm),\ell^1(\Gm))$ is surjunctive whenever $\Gm$ is a discrete Moore group. Is there a more direct proof of this?
\end{qn}

For instance, if $\Gm$ is abelian then surjunctivity can be proved easily by considering the Gelfand transform of $\ell^1(\Gm)$ and noting that all its maximal ideals have bounded approximate identities. It would be interesting to see an analogous argument for Moore groups, which did not require the full machinery deployed in Runde's article.
\subsection*{Acknowledgements}
The author thanks Matthew Daws for comments on an early version of this article, and for helpful conversations regarding the articles~\cite{Herz_p-space,Herz_AIF73}.
\end{section}

\appendix

\begin{section}{The action of \texorpdfstring{$\VN(\Gm)$}{VN(Gamma)} on its noncommutative \texorpdfstring{$L^p$}{L-p}-spaces}
In this short appendix we collect, for convenience, the definition and some basic properties of the noncommutative $L^p$-spaces associated to a group von Neumann algebra. Rather than refer the reader to sources that treat such constructions in full generality and sophistication, we shall present the background needed to justify the statements in the proof of Theorem~\ref{t:headline}.
For more details see \cite[\S\S2--3]{Dix_BSMF53} or \cite[\S1]{PiXu_handbook}.

Throughout this section $\cM$ is a fixed von Neumann algebra, equipped with a faithful, normal, \emph{finite} tracial state $\tau$. (In the case $\cM=\VN(\Gm)$, the trace $\tau$ is the canonical one given by $T\mapsto \pair{T\delta_e}{\delta_e}$.)

\begin{defn}
Let $x\in\cM$\/. For $1\leq p <\infty$ and $x\in\cM$, put
\begin{equation}
\pnorm{p}{x} \defeq \tau((x^*x)^{p/2})^{1/p}
\end{equation}
and for $p=\infty$ put $\pnorm{\infty}{x} = \norm{x}$.
Then $\pnorm{p}{\cdot}$ defines a norm on $\cM$, and we denote the completion of $(\cM,\pnorm{p}{\cdot})$ by $L^p(\cM,\tau)$.
\end{defn}

\begin{rem}
It is worth identifying $L^p(\cM,\tau)$ in the cases $p=1,2$ and~$\infty$.
By definition, $L^\infty(\cM,\tau)\equiv \cM$. 
It is also clear from the definition that $L^2(\cM,\tau)$ is nothing but the Hilbert space associated to $\cM$ by the GNS construction for the state $\tau$\/.
Finally, it turns out that elements of $L^1(\cM,\tau)$ correspond to ultraweakly continuous functionals on $\cM$\/, and hence $L^1(\cM,\tau)$ is naturally identified with the isometric predual $\cM_*$.  (See e.g.~\cite[Section 2, Th\'eor\`eme 5]{Dix_BSMF53}.)

When $\cM=\VN(\Gm)$ and $\tau$ is the canonical trace $T\mapsto \pair{T\delta_e}{\delta_e}$, we can therefore identify $L^2(\VN(\Gm),\tau)$ with $\ell^2(\Gm)$ and $L^1(\VN(\Gm),\tau)$ with the Fourier algebra $A(\Gm)$.
\end{rem}

\begin{lem}
Let $1\leq p\leq\infty$. Then
$\pnorm{p}{ax} \leq \pnorm{\infty}{a} \pnorm{p}{x}$
for all $a,x\in\cM$\/.
Consequently, the left action of $\cM$ on itself extends to give a continuous left action of $\cM$ on $L^p(\cM,\tau)$, and hence a continuous algebra homomorphism $\imath_p: \cM\to \Bdd(L^p(\cM,\tau))$.
\end{lem}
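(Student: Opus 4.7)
The plan is to separate the algebraic claim from the analytic one. Once the norm inequality $\pnorm{p}{ax}\leq\pnorm{\infty}{a}\pnorm{p}{x}$ is established for $a,x\in\cM$, the algebra-homomorphism assertion is automatic: each $L_a$ extends uniquely by continuity from $\cM$ to a bounded operator on $L^p(\cM,\tau)$, and the identities $L_{a+b}=L_a+L_b$ and $L_{ab}=L_aL_b$ (already holding on the dense subspace $\cM$) pass to the extension. So the focus falls on the norm inequality, which I would handle by endpoint estimates plus interpolation.

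The case $p=\infty$ is just the submultiplicativity $\norm{ax}\leq\norm{a}\norm{x}$ in $\cM$, so suppose $1\leq p<\infty$. For the endpoint $p=1$, I would start from $a^*a\leq\norm{a}^2\cdot\id$ in $\cM$; sandwiching on the left by $x^*$ and on the right by $x$ (which preserves order between positive elements of $\cM$) yields
\[ x^*a^*ax \;\leq\; \norm{a}^2\,x^*x \quad\text{in }\cM. \]
Since $t\mapsto t^{1/2}$ is operator monotone on $[0,\infty)$, one may take square roots to get $(x^*a^*ax)^{1/2}\leq\norm{a}(x^*x)^{1/2}$, the right-hand side being simplified by the continuous functional calculus. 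Applying the positive linear functional $\tau$ then gives
\[ \pnorm{1}{ax} \;=\; \tau((x^*a^*ax)^{1/2}) \;\leq\; \norm{a}\,\tau((x^*x)^{1/2}) \;=\; \norm{a}\,\pnorm{1}{x}. \]

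For intermediate $1<p<\infty$ I would invoke the noncommutative Riesz--Thorin interpolation theorem: $L_a$ is bounded on $\cM$ with norm $\leq\norm{a}$ in both $\pnorm{1}{\cdot}$ and $\pnorm{\infty}{\cdot}$, hence by interpolation with norm $\leq\norm{a}$ in $\pnorm{p}{\cdot}$ as well.

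The expected main obstacle is precisely the range $p>2$, where a direct imitation of the $p=1$ argument fails because $t\mapsto t^{p/2}$ is no longer operator monotone. Interpolation sidesteps this cleanly; alternatively, one could reduce the case $p>2$ to the case $1<q<2$ by duality, using that $\tau$ is tracial (which gives both $\pnorm{p}{y}=\pnorm{p}{y^*}$ and a H\"older-type pairing between $L^p(\cM,\tau)$ and $L^q(\cM,\tau)$ for conjugate exponents), thereby expressing left multiplication by $a$ on $L^p$ in terms of right multiplication by $a^*$ on $L^q$, where the operator-monotonicity argument once again applies.
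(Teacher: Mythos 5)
Your argument is correct, but it is genuinely different from what the paper does: the paper offers no proof at all for this lemma, deferring entirely to Dixmier (Th\'eor\`eme 4 of \cite{Dix_BSMF53} for $p=1$ and p.~26, Corollaire~3 there for general $p$), so you have supplied an argument where the author supplies a citation. Your $p=1$ computation is sound: $a^*a\leq\norm{a}^2\id$, conjugation by $x$ preserves the order, operator monotonicity of $t\mapsto t^{1/2}$ gives $(x^*a^*ax)^{1/2}\leq\norm{a}(x^*x)^{1/2}$, and positivity of $\tau$ finishes it. One small observation: the same argument handles the whole range $1\leq p\leq 2$ directly, since $t\mapsto t^{p/2}$ is operator monotone precisely for $p\leq 2$; so interpolation (or your duality alternative) is only genuinely needed for $2<p<\infty$. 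The trade-off is that both of your routes for that range import nontrivial machinery --- the identification of the complex interpolation space of the couple $(L^1(\cM,\tau),\cM)$ with $L^p(\cM,\tau)$ in one case, and the noncommutative H\"older duality between $L^p$ and $L^q$ together with $\pnorm{p}{y}=\pnorm{p}{y^*}$ in the other --- which is roughly the same investment as simply quoting Dixmier, as the paper does. (In the duality variant, whether one lands on right multiplication by $a$ or by $a^*$ depends on whether the pairing is taken bilinear, $\tau(xy)$, or sesquilinear; this does not affect the argument since you pass to adjoints anyway.) Your reduction of the algebraic assertions to density of $\cM$ in $L^p(\cM,\tau)$ is exactly as it should be.
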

We refer to \cite{Dix_BSMF53} for the proof. (Briefly: for $p=\infty$ the claim is tautologous, and for $p=1$ it is \cite[Th\'eor\`eme 4]{Dix_BSMF53}. The general case follows from \cite[p.~26, Corollaire~3]{Dix_BSMF53}.)

\medskip
The following lemma is surely well known to specialists, but the author does not know of any explicit reference for it.
\begin{lem}
Let $a\in\cM$ and let $1\leq p \leq \infty$. Let $\cC$ be the closed unital subalgebra of $\cM$ generated by $a^*a$. Then
\[ \norm{a} \geq \sup\{ \pnorm{p}{ax} \;:\; x\in \cC, \pnorm{p}{x} \leq 1\} \,.\]
In particular, the embedding $\imath_p:\cM\to\Bdd(L^p(\cM,\tau))$ has closed range.
\end{lem}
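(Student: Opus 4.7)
The displayed inequality $\norm{a}\geq \sup\{\pnorm{p}{ax}\st x\in\cC,\ \pnorm{p}{x}\leq 1\}$ is delivered directly by the preceding lemma: for every $x\in\cC$ with $\pnorm{p}{x}\leq 1$ one has $\pnorm{p}{ax}\leq \pnorm{\infty}{a}\pnorm{p}{x}\leq \norm{a}$, and taking the supremum over such $x$ reproduces exactly the stated bound. So the first line of the lemma is nothing but the prior estimate restricted to the subalgebra~$\cC$; no further argument is required for that line in isolation.

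The content lies in the ``in particular'' assertion, and my plan is to upgrade the displayed inequality to an equality, whence closed range of $\imath_p$ -- in fact isometricity -- is automatic. The workspace for this is $\cC$ itself, which, being a unital abelian $\Cst$-subalgebra of $\cM$ generated by the positive element $a^*a$, is isometrically $*$-isomorphic to $C(\Sp(a^*a))$; moreover $t_0\defeq\norm{a^*a}=\norm{a}^2$ belongs to $\Sp(a^*a)$. The restriction of $\tau$ to $\cC$ corresponds to a probability measure $\mu$ on $\Sp(a^*a)$, and faithfulness of $\tau$ on $\cM$ forces $\mu$ to have full support (any open subset of $\Sp(a^*a)$ of zero $\mu$-mass would yield a nonzero positive element of $\cC$ on which $\tau$ vanishes).

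The construction is then to pick nonnegative continuous $f_n$ on $\Sp(a^*a)$ supported in a shrinking sequence of neighbourhoods of $t_0$, set $x_n\defeq f_n(a^*a)\in\cC$, and normalize so that $\pnorm{p}{x_n}=1$. Since $x_n$ is positive and commutes with $a^*a$, the continuous functional calculus identifies
\[ \pnorm{p}{x_n}^p = \int f_n(t)^p\, d\mu(t), \qquad \pnorm{p}{ax_n}^p = \int t^{p/2} f_n(t)^p\, d\mu(t), \]
so that $\pnorm{p}{ax_n}^p$ is a weighted average of $t^{p/2}$ against a probability density concentrated near $t_0$, and tends to $t_0^{p/2}=\norm{a}^p$ as the supports shrink. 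This gives the matching lower bound $\sup \geq \norm{a}$, hence equality with the displayed upper bound. Since the supremum is in turn bounded above by $\norm{\imath_p(a)}$ (the unit ball of $\cC$ lies inside the unit ball of $L^p(\cM,\tau)$), while the preceding lemma supplies $\norm{\imath_p(a)}\leq\norm{a}$, we deduce $\norm{\imath_p(a)}=\norm{a}$, so $\imath_p$ is isometric and a fortiori has closed range.

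The main obstacles I anticipate are minor: verifying full support of $\mu$ from faithfulness of $\tau$ (routine but worth spelling out) and handling the endpoint $p=\infty$, where the supremum is saturated trivially by taking $x=\id$ since $\pnorm{\infty}{\cdot}=\norm{\cdot}$. The substantive step is the spectral concentration argument, which in the end reduces the noncommutative claim to an elementary calculation on the scalar spectrum of $a^*a$.
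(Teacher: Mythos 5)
Your proof is correct and takes essentially the same route as the paper: both arguments reduce to the commutative algebra $\cC\iso C(\Sp(a^*a))$, represent $\tau\vert_{\cC}$ by a probability measure $\mu$ whose full support comes from faithfulness of $\tau$, and use the continuous functional calculus to produce elements of $\cC$ of $\pnorm{p}{\cdot}$-norm one, concentrated near $\norm{a^*a}\in\Sp(a^*a)$, on which $\pnorm{p}{ax}$ approaches $\norm{a}$ (the paper packages the concentration step via a duality/measure-theoretic choice of a single density $f$ with $\int t^{p/2}f\,d\mu\geq 1-\veps$, while you use explicit bump functions, but this is cosmetic). You are also right that the displayed inequality as printed is the trivial direction supplied by the preceding lemma, and that the substantive content --- which is what the paper's proof actually establishes --- is the reverse inequality, giving isometry of $\imath_p$ and hence closed range.
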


As the proof needs only basic $\Cst$-algebraic tools, we give a quick sketch. 
Without loss of generality, assume that $1\leq p <\infty$ and that $\norm{a}=\norm{a^*a}=1$. Identify $\cC$ with $C(\Sp(a^*a))$ and let $\mu$ be the probability measure on $\Sp(a^*a)$ that corresponds to $\tau\vert_{\cC}$\/. Let $\veps>0$\/; then by duality and basic measure theory, there exists a continuous positive function $f:\Sp(a^*a)\to[0,\infty)$ such that
\[ \int_{\Sp(a^*a)}f(t)\,d\mu(t)=1 \quad\text{ and }\quad \veps+ \int_{\Sp(a^*a)} t^{p/2} f(t)\,d\mu(t) \geq \sup_{t\in \Sp(a^*a)} \abs{t^{p/2}} = 1\,.\]
Put $x=f(a^*a)^{1/p} =f^{1/p}(a^*a)$\/, and observe that
\[ \pnorm{p}{x}^p = \tau((x^*x)^{p/2}) = \tau( f(a^*a)) = 1 \,,\]
while, since $x=x^*$ commutes with $a^*a$,
\[ \pnorm{p}{ax}^p = \tau( (x^*a^*ax)^{p/2}) = \tau((a^*a x^*x)^{p/2}) = \tau((a^*a)^{p/2} f(a^*a)) \geq 1- \veps \,.\]
Since $\veps$ was arbitrary, the result follows.
\end{section}


\begin{thebibliography}{10}

\bibitem{MDC_fullCF2}
{\sc M.~D. Choi}, {\em The full {$C\sp{\ast} $}-algebra of the free group on
  two generators}, Pacific J. Math., 87 (1980), pp.~41--48.

\bibitem{YC_CMB}
{\sc Y.~Choi}, {\em Injective convolution operators on $\ell^\infty({\Gamma})$
  are surjective}.
\newblock {To appear in {\it Canad.~Math.~Bulletin}\/}.

\bibitem{CuLo_amen}
{\sc P.~C. Curtis, Jr.} and {\sc R.~J. Loy}, {\em The structure of amenable {B}anach algebras}, J. London Math. Soc. (2), 40 (1989), pp.~89--104.

\bibitem{Dix_BSMF53}
{\sc J.~Dixmier}, {\em Formes lin\'eaires sur un anneau d'op\'erateurs}, Bull.
  Soc. Math. France, 81 (1953), pp.~9--39.

\bibitem{Fol_AHAbook}
{\sc G.~B. Folland}, {\em A course in abstract harmonic analysis}, Studies in
  Advanced Mathematics, CRC Press, Boca Raton, FL, 1995.

\bibitem{Gott_surjunc}
{\sc W.~Gottschalk}, {\em Some general dynamical notions}, in Recent advances
  in topological dynamics ({P}roc. {C}onf. {T}opological {D}ynamics, {Y}ale
  {U}niv., {N}ew {H}aven, {C}onn., 1972; in honor of {G}ustav {A}rnold
  {H}edlund), Springer, Berlin, 1973, pp.~120--125. Lecture Notes in Math.,
  Vol. 318.

\bibitem{Halp_onBer}
{\sc I.~Halperin}, {\em On a theorem of {S}terling {B}erberian}, C. R. Math.
  Rep. Acad. Sci. Canada, 3 (1981), pp.~33--35.

\bibitem{Herz_p-space}
{\sc C.~Herz}, {\em The theory of {$p$}-spaces with an application to
  convolution operators.}, Trans. Amer. Math. Soc., 154 (1971), pp.~69--82.

\bibitem{Herz_AIF73}
\leavevmode\vrule height 2pt depth -1.6pt width 23pt, {\em Harmonic synthesis
  for subgroups}, Ann. Inst. Fourier (Grenoble), 23 (1973), pp.~91--123.

\bibitem{KadRin_v2}
{\sc R.~V. Kadison} and {\sc J.~R. Ringrose}, {\em Fundamentals of the theory of
  operator algebras. {V}ol. {II}}, vol.~100 of Pure and Applied Mathematics,
  Academic Press Inc., Orlando, FL, 1986.
\newblock Advanced theory.

\bibitem{LarNeu_LST}
{\sc K.~B. Laursen} and {\sc M.~M. Neumann}, {\em An introduction to local spectral theory}, vol.~20 of London Mathematical Society Monographs. New Series, The Clarendon Press Oxford University Press, New York, 2000.

\bibitem{Mont}
{\sc M.~S. Montgomery}, {\em Left and right inverses in group algebras}, Bull.
  Amer. Math. Soc., 75 (1969), pp.~539--540.

\bibitem{PiXu_handbook}
{\sc G.~Pisier} and {\sc Q.~Xu}, {\em Non-commutative {$L\sp p$}-spaces}, in Handbook of the geometry of {B}anach spaces, {V}ol.\ 2, North-Holland, Amsterdam,
  2003, pp.~1459--1517.

\bibitem{Rud_FA}
{\sc W.~Rudin}, {\em Functional analysis}, International Series in Pure and
  Applied Mathematics, McGraw-Hill Inc., New York, second~ed., 1991.

\bibitem{Run_decMoore}
{\sc V.~Runde}, {\em Local spectral properties of convolution operators on
  non-abelian groups}, Proc. Edinburgh Math. Soc. (2), 39 (1996), pp.~143--149.

\bibitem{GAW_JAuMS86b}
{\sc G.~A. Willis}, {\em Translation invariant functionals on {$L\sp p(G)$}
  when {$G$} is not amenable}, J. Austral. Math. Soc. Ser. A, 41 (1986),
  pp.~237--250.

\end{thebibliography}

\vfill

\noindent%
\begin{tabular}{l}
Y. Choi\\
D\'epartement de math\'ematiques
et de statistique,\\
Pavillon Alexandre-Vachon\\
Universit\'e Laval\\
Qu\'ebec, QC \\
Canada, G1V 0A6 \\
{\bf Email: \tt y.choi.97@cantab.net}
\end{tabular}
\end{document}